\DeclareMathOperator*{\argmax}{arg\,max}
\theoremstyle{plain}
\newtheorem{theorem}{Theorem}
\newtheorem{lemma}{Lemma}
\theoremstyle{definition}
\title{\textbf{Functional calibration estimation by the maximum entropy on the mean principle}}
\author[a,b]{Santiago Gallón\thanks{\texttt{santiagog@udea.edu.co}}}
\author[b]{Jean-Michel Loubes\thanks{\texttt{jean-michel.loubes@math.univ-toulouse.fr}}}
\author[b]{Fabrice Gamboa\thanks{\texttt{gamboa@math.univ-toulouse.fr}}}
\affil[a]{\small{Departamento de Matemáticas y Estadística, Facultad de Ciencias Económicas, Universidad de Antioquia, Medell\'in, Colombia.}}
\affil[b]{\small{Institut de Mathématiques de Toulouse, Universit\'e Toulouse III Paul Sabatier, Toulouse, France.}}
\begin{document}
\maketitle

\begin{abstract}
\noindent We extend the problem of obtaining an estimator for the finite population mean parameter incorporating complete auxiliary information through calibration estimation in survey sampling but considering a functional data framework. The functional calibration sampling weights of the estimator are obtained by matching the calibration estimation problem with the maximum entropy on the mean principle. In particular, the calibration estimation is viewed as an infinite dimensional linear inverse problem following the structure of the maximum entropy on the mean approach. We give a precise theoretical setting and estimate the functional calibration weights assuming, as prior measures, the centered Gaussian and compound Poisson random measures. Additionally, through a simple simulation study, we show that our functional calibration estimator improves its accuracy compared with the Horvitz-Thompson estimator.\vskip .1in
\noindent \textbf{Key words: } Auxiliary information; Functional calibration weights; Functional data; Infinite dimensional linear inverse problems; Survey sampling.
\end{abstract}

\section{Introduction}
\label{intro}

In survey sampling, the well-known calibration estimation method proposed by \citet{Deville-Sarndal-92} allows to construct an estimate for the finite population total or mean of a survey variable by incorporating complete auxiliary information on the study population in order to improve its efficiency. The main idea of the calibration method consists in modifying the standard sampling design weights $d_{i}$ of the unbiased Horvitz-Thompson estimator \citet{Horvitz-Thompson-52} by new weights $w_{i}$ close enough to $d_{i}$'s according to some distance function $\mathcal{D}(w,d)$, while satisfying a linear calibration equation in which the auxiliary information is taken into account. 
The estimator based on these new calibration weights is asymptotically design unbiased and consistent with a variance smaller than the Horvitz-Thompson one.

The idea of calibration has been extended to estimate other finite population parameters, such as finite population variances, distribution functions and quantiles. See, for instance, \citet{Rao.et.al-90}, \citet{Kovacevic-97}, \citet{Theberge-99}, \citet{Singh-01}, \citet{Wu-Sitter-01}, \citet{Wu-03}, \citet{Harms-Duchesne-06}, \citet{Rueda.et.al-07}, \citet{Sarndal-07}, and references therein. Recent developments have also been conducted toward, for example, the approach of (parametric and non-parametric) non-linear relationships between the survey variable and the set of auxiliary variables for the underlying assisting model, and a broad classes of conceivable calibration constraints functions (\citet{Breidt-Opsomer-00, Wu-Sitter-01, Wu-03, Montanari-Ranalli-05}).

One interesting extension emerges when both the survey and auxiliary variables are considered as infinite dimensional objects such as random functions. This generalization relies on the fact that, due to improvements in data collection technologies, large and complex databases are being registered frequently at very fine time scales, regarded these as functional datasets. This kind of data are collected in many scientific fields as molecular biology, astronomy, marketing, finance, economics, among many other. A depth overview on functional data analysis can be found in \citet{Ramsay-Silverman-02}, \citet{Ramsay-Silverman-05} and \citet{Horvath-Kokoszka-12}. Functional versions of the Horvitz-Thompson estimator have been proposed recently by \citet{Cardot-Josserand-11} and \citet{Cardot-Degras-Josserand-11} for the cases of error free and noisy functional data, respectively.

The purpose of the present paper is to extend the problem of obtaining calibration sampling weights using functional data. This is conducted through the generalization of the work by \citet{Gamboa-Loubes-Rochet-11}, where the calibration estimation problem, which is considered as a
linear inverse problem following \citet{Theberge-99}, is matched with the maximum entropy on the mean approach under a finite dimensional setting. The maximum entropy on the mean principle applied to our goal focuses on reconstructing an unique posterior measure $\nu^{\ast}$ that maximizes the entropy $S(\nu\parallel\upsilon)$ between a feasible finite measure $\nu$ relative to a given prior measure $\upsilon$ subject to a linear constraint. 
Finally, the functional calibration sampling weights are defined as the mathematical expectation with respect to $\nu^{\ast}$ of a random variable with mean equal to the standard sampling design weights $d_{i}$. In this paper, we reconstruct $\nu^{\ast}$ adopting the random measure approach by \citet{Gzyl-Velasquez-11} under an infinite dimensional context.

The maximum entropy method on the mean  was introduced by \citet{Navaza-85, Navaza-86} to solve an inverse problem in crystallography, and has been further investigated, from a mathematical point of view, by \citet{Gamboa-89}, \citet{Dacunha-Gamboa-90} and \citet{Gamboa-Gassiat-97}. Complementary references on the approach are \citet{Mohammad-Djafari-96}, \citet{Marechal-99}, \citet{Gzyl-00}, \citet{Gzyl-Velasquez-11} and \citet{Golan-Gzyl-12}. Maximum entropy solutions, as an alternative to the Tikhonov's regularization of ill-conditioned inverse problems, provide a very simple and natural way to incorporate constraints on the support and the range of the solution \citet{Gamboa-Gassiat-97}, and its usefulness has been proven, e.g., in crystallography, seismic tomography and image reconstruction.

The paper is organized as follows. Sect.~\ref{sec:2}, presents the calibration estimation framework for the functional finite population mean. In Sect.~\ref{sec:3}, the connection between calibration and maximum entropy on the mean approaches is established, and the functional calibration sampling weights are obtained assuming two prior measures. In Sect.~\ref{sec:4}, the respective approximations of the functional maximum entropy on the mean estimators are derived. The performance of the estimator is studied through a simple simulation study in Sect.~\ref{sec:5}. Some concluding remarks are given in Sect.~\ref{sec:6}. Finally, the technical proofs of the technical results are gathered in the Appendix.

\section{Calibration estimation for the functional finite population mean}
\label{sec:2}

Let $U_{N}=\left\lbrace 1,\ldots,N\right\rbrace $ be a finite survey population from which a realized sample $a$ is drawn with fixed-size sampling design $p_{N}(a)=\mathbb{P}(A=a)$. Here $a\in\mathcal{A}$,
where $\mathcal{A}$ is the collection of all subsets $A$ of $U_{N}$ that contains all possible samples of $n_{N}$ different elements randomly drawn from $U_{N}$ according to a given sampling selection
scheme, and $\mathbb{P}$ a probability measure on $\mathcal{A}$. The first order inclusion probabilities, $\pi_{iN}=\mathbb{P}(i\in a)=\sum_{a\in A(i)}p_{N}(a)$, where $A(i)$ represents the set of samples that contain the $i$th element, are assumed to be strictly positive for all $i\in U_{N}$. See \citet{Sarndal-92} and \citet{Fuller-09} for details about survey sampling.

Associated with the $i$th element in $U_{N}$ there exists an unique functional random variable $Y_{i}(t)$ with values in the space of all continuous real-valued functions defined on $\left[0,T\right]$ with $T<+\infty$, $\mathcal{C}(\left[0,T\right])$. However, only the sample functional data, $Y_{i}(t)$, $i\in a$ are observed. Additionally, an auxiliary $q$-dimensional functional vector is available for each $i\in U_{N}$, $\boldsymbol{X}_{i}(t)=(X_{i1}(t),\ldots,X_{iq}(t))^{\top}\in \mathcal{C}\bigl(\left[0,T \right]^{q}\bigr)$ with $q\geq1$. The known functional finite population mean is denoted by $\boldsymbol{\mu}_{X}(t)=N^{-1}\sum_{i\in U_{N}}\boldsymbol{X}_{i}(t)$.

The main goal is to obtain a design consistent estimator for the unknown functional finite population mean, $\mu_{Y}(t)=N^{-1}\sum_{i\in U_{N}}Y_{i}(t)$, based on the calibration method. The idea consists in modify the basic sampling design weights, $d_{i}=\pi_{i}^{-1}$, of the unbiased functional Horvitz-Thompson estimator defined by $\hat{\mu}_{Y}^{HT}(t)=N^{-1}\sum_{i\in a}d_{i}Y_{i}(t)$, for new more efficient weights $w_{i}>0$ incorporating the auxiliary information. These weights must to be sufficiently close to $d_{i}$'s according to some dissimilarity distance function $\mathcal{D}_{a}(w,d)$ on $\mathbb{R}_{+}^{n}$, and satisfying the set of calibration constraints 
\[
N^{-1}\sum_{i\in a}w_{i}\boldsymbol{X}_{i}(t)=\boldsymbol{\mu}_{X}(t).
\]

The functional estimator for $\mu_{Y}(t)$ based on the calibration weights is expressed by the linear weighted estimator $\hat{\mu}_{Y}(t)=N^{-1}\sum_{i\in a}w_{i}Y_{i}(t)$. Different calibration estimators can be obtained depending on the chosen distance function \citet{Deville-Sarndal-92}. However, it is well known that, in the finite dimensional setting, all of calibration estimators are asymptotically equivalent to the one obtained through the use of the popular chi-square distance function $\mathcal{D}_{a}(w,d)=\sum_{i\in a}(w_{i}-d_{i})^{2}/2d_{i}q_{i}$,
where $q_{i}$ is an individual given positive weight uncorrelated with $d_{i}$.

Assuming a point-wise multiple linear regression model \citet{Ramsay-Silverman-05}, $Y_{i}(t)=\boldsymbol{X}_{i}(t)^{\top}\boldsymbol{\beta}(t)+\varepsilon_{i}(t)$, where $\varepsilon_{i}(t)$ is the $i$th zero-mean measurement functional error independent of $\boldsymbol{X}_{i}(t)$ with variance structure given by a diagonal matrix with elements $1/q_{i}$ unrelated to $d_{i}$, then the estimator for $\mu_{Y}(t)$ from the restricted minimization problem can be expressed as 
\[
\begin{split}
\hat{\mu}_{Y}(t)
& =\hat{\mu}_{Y}^{HT}(t)+\left\lbrace\boldsymbol{\mu}_{X}(t)-\hat{\boldsymbol{\mu}}_{X}^{HT}(t)\right\rbrace^{\top}\widehat{\boldsymbol{\beta}}(t),
\end{split}
\]
where $\hat{\boldsymbol{\mu}}_{X}^{HT}(t)=\sum_{i\in a}d_{i}\boldsymbol{X}_{i}(t)$ denotes the Horvitz-Thompson estimator for the functional vector $\boldsymbol{X}(t)$, and $\widehat{\boldsymbol{\beta}}(t)=\left\lbrace \sum_{i\in a}d_{i}q_{i}\boldsymbol{X}_{i}(t)\boldsymbol{X}_{i}(t)^{\top}\right\rbrace ^{-1}\sum_{i\in a}d_{i}q_{i}\boldsymbol{X}_{i}(t)Y_{i}(t)$
is the weighted estimator of the functional coefficient vector $\boldsymbol{\beta}(t)$, whose uniqueness relies on the existence of the inverse of the matrix $\sum_{i\in a}d_{i}q_{i}\boldsymbol{X}_{i}(t)\boldsymbol{X}_{i}(t)^{\top}$ for all $t$.

The calibration weights can be generalized allowing functional calibration weights ${w}_{i}(t)$ which can be obtained from the minimization of the generalized chi-square distance $\mathcal{D}_{a}^{\ast}(w,d)$, expressed below, subject to the functional calibration restriction
\begin{equation}\label{equ1}
N^{-1}\sum_{i\in a}w_{i}(t)\boldsymbol{X}_{i}(t)=\boldsymbol{\mu}_{X}(t).
\end{equation}

The existence of functional calibration weights is stated in the next theorem, which is a straightforward generalization of the finite dimensional results of \citet{Deville-Sarndal-92}.

\begin{theorem}\label{theorem.1}
Assume the existence of a functional vector $\boldsymbol{w}(t)=\left(w_{1}(t),\ldots,w_{n}(t)\right)^{\top}$ such that~\eqref{equ1} holds, and the inverse of the matrix $\sum_{i\in a}d_{i}q_{i}\left(t\right)\boldsymbol{X}_{i}(t)\boldsymbol{X}_{i}(t)^{\top}$. Then, for a fixed $t\in\left[0,T\right]$, $\hat{\boldsymbol{w}}(t)$ minimizes over $\mathcal{C}(\left[0,T\right]^{n})$ the generalized chi-square distance 
\[
\mathcal{D}_{a}^{\ast}(w,d)=\sum_{i\in a}\frac{\bigl(w_{i}(t)-d_{i}\bigr)^{2}}{2d_{i}q_{i}(t)}
\]
subject to~\eqref{equ1}, where the functional calibration weight $\hat{w}_{i}(t)$ for all $i\in a$ is given by
\[
\hat{w}_{i}(t)=d_{i}\left[1+q_{i}(t)\left\lbrace \boldsymbol{\mu}_{X}(t)-\hat{\boldsymbol{\mu}}_{X}^{HT}(t)\right\rbrace ^{\top}\left\lbrace \sum_{i\in a}d_{i}q_{i}(t)\boldsymbol{X}_{i}(t)\boldsymbol{X}_{i}(t)^{\top}\right\rbrace ^{-1}\boldsymbol{X}_{i}(t)\right].
\]
\end{theorem}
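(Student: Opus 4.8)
The plan is to exploit the fact that both the objective $\mathcal{D}_a^{\ast}(w,d)$ and the calibration constraint~\eqref{equ1} are \emph{pointwise} in $t$, so that the minimization over $\mathcal{C}(\left[0,T\right]^{n})$ decouples into a separate finite-dimensional problem for each fixed $t\in\left[0,T\right]$. Fixing such a $t$, I would regard the task as a strictly convex quadratic program in $\boldsymbol{w}(t)\in\mathbb{R}^{n}$ subject to the $q$ affine constraints in~\eqref{equ1}. Since $d_i>0$ and $q_i(t)>0$, the objective has a diagonal, positive-definite Hessian with entries $1/\bigl(d_iq_i(t)\bigr)$, hence it is strictly convex and coercive on the affine feasible set; together with the feasibility hypothesis of the statement (existence of some $\boldsymbol{w}(t)$ satisfying~\eqref{equ1}) this guarantees that a minimizer exists and is unique, and that any feasible stationary point of the Lagrangian is automatically that global minimizer.

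First I would introduce a multiplier vector $\boldsymbol{\lambda}\in\mathbb{R}^{q}$ for the $q$ scalar constraints and form the Lagrangian
\[
L(\boldsymbol{w},\boldsymbol{\lambda})=\sum_{i\in a}\frac{\bigl(w_i(t)-d_i\bigr)^{2}}{2d_iq_i(t)}-\boldsymbol{\lambda}^{\top}\Bigl(N^{-1}\sum_{i\in a}w_i(t)\boldsymbol{X}_i(t)-\boldsymbol{\mu}_X(t)\Bigr).
\]
Differentiating in $w_i(t)$ and setting the derivative to zero gives the stationarity condition
\[
\frac{w_i(t)-d_i}{d_iq_i(t)}=N^{-1}\boldsymbol{X}_i(t)^{\top}\boldsymbol{\lambda},
\qquad\text{hence}\qquad
w_i(t)=d_i\bigl(1+q_i(t)\,\boldsymbol{X}_i(t)^{\top}\tilde{\boldsymbol{\lambda}}\bigr),
\]
after absorbing the factor $N^{-1}$ into a rescaled multiplier $\tilde{\boldsymbol{\lambda}}$. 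This already exhibits the advertised affine-in-$\boldsymbol{X}_i(t)$ shape of the optimal weights.

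Next I would pin down the multiplier by inserting this expression back into~\eqref{equ1}, which yields
\[
N^{-1}\sum_{i\in a}d_i\boldsymbol{X}_i(t)+N^{-1}\Bigl(\sum_{i\in a}d_iq_i(t)\boldsymbol{X}_i(t)\boldsymbol{X}_i(t)^{\top}\Bigr)\tilde{\boldsymbol{\lambda}}=\boldsymbol{\mu}_X(t).
\]
Recognizing the first term as (a normalization of) $\hat{\boldsymbol{\mu}}_X^{HT}(t)$ and invoking the invertibility hypothesis on $\sum_{i\in a}d_iq_i(t)\boldsymbol{X}_i(t)\boldsymbol{X}_i(t)^{\top}$, I can solve uniquely for $\tilde{\boldsymbol{\lambda}}$ as that inverse matrix applied to the discrepancy $\boldsymbol{\mu}_X(t)-\hat{\boldsymbol{\mu}}_X^{HT}(t)$. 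Using the symmetry of the Gram-type matrix to rewrite $\boldsymbol{X}_i(t)^{\top}M^{-1}v=v^{\top}M^{-1}\boldsymbol{X}_i(t)$, back-substitution into the stationarity relation reproduces exactly the closed form for $\hat{w}_i(t)$ given in the statement.

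I do not expect a genuine obstacle here, since this is a textbook convex program of Deville--S\"arndal type; the argument is essentially routine. The only points requiring care are, first, tracking the $N^{-1}$ normalization consistently across $\boldsymbol{\mu}_X(t)$, $\hat{\boldsymbol{\mu}}_X^{HT}(t)$ and the constraint so that the final constant matches the stated formula, and second, verifying that the pointwise minimizers assemble into an element of $\mathcal{C}(\left[0,T\right]^{n})$ --- which I would obtain from continuity of $t\mapsto q_i(t)$ and $t\mapsto\boldsymbol{X}_i(t)$ together with continuity of matrix inversion on the set where the matrix $\sum_{i\in a}d_iq_i(t)\boldsymbol{X}_i(t)\boldsymbol{X}_i(t)^{\top}$ remains invertible.
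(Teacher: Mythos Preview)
Your proposal is correct and follows essentially the same route as the paper's proof: form the Lagrangian for the pointwise quadratic program, derive the stationarity condition $w_i(t)=d_i\bigl(1+q_i(t)\boldsymbol{X}_i(t)^{\top}\boldsymbol{\lambda}\bigr)$, substitute into the calibration constraint to solve for the multiplier via the assumed invertibility, and back-substitute. Your treatment is in fact slightly more thorough than the paper's, which does not explicitly discuss existence/feasibility or the continuity-in-$t$ of the resulting weights.
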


Note that, for this generalized setting, the functional calibration estimator for $\mu_{Y}(t)$ is expressed by
\[
\begin{split}
\hat{\mu}_{Y}(t)
& =N^{-1}\sum_{i\in a}\hat{w}_{i}(t)Y_{i}(t)=\hat{\mu}_{Y}^{HT}(t)+\left\lbrace \boldsymbol{\mu}_{X}(t)-\hat{\boldsymbol{\mu}}_{X}^{HT}(t)\right\rbrace ^{\top}\widehat{\boldsymbol{\beta}}(t),
\end{split}
\]
where
\[
\widehat{\boldsymbol{\beta}}(t)=\left\lbrace \sum_{i\in a}d_{i}q_{i}(t)\boldsymbol{X}_{i}(t)\boldsymbol{X}_{i}(t)^{\top}\right\rbrace ^{-1}\sum_{i\in a}d_{i}q_{i}(t)\boldsymbol{X}_{i}(t)Y_{i}(t),
\]
provided the inverse of the matrix $\sum_{i\in a}d_{i}q_{i}(t)\boldsymbol{X}_{i}(t)\boldsymbol{X}_{i}(t)^{\top}$ exists for all $t$.

\section{Maximum entropy on the mean for survey sampling}
\label{sec:3}

Let $(\widetilde{\mathcal{X}},\mathcal{F})$ be an arbitrary measurable space over which we want to search for an $\sigma$-finite positive measure $\mu$. The maximum entropy on the mean principle provides an efficient way of getting an estimator for some linear functional $\mu_{\widetilde{Y}}(t)=\int_{\mathcal{\widetilde{X}}}\widetilde{Y}(t)d\mu$ satisfying a known $q$-vector of functionals $\int_{\mathcal{\widetilde{X}}}\widetilde{\boldsymbol{X}}(t)d\mu=\boldsymbol{\mu}_{X}(t)$,
where $\widetilde{Y}(t)\colon\widetilde{\mathcal{X}}\to\mathcal{C}(\left[0,T\right])$ and $\widetilde{\boldsymbol{X}}(t)\colon\mathcal{\widetilde{X}}\to\mathcal{C}(\left[0,T\right]^{q})$.

A natural unbiased and consistent estimator of $\mu_{\widetilde{Y}}(t)$ is the empirical functional mean $\hat{\mu}_{\widetilde{Y}}(t)=\int_{\chi}\widetilde{Y}(t)d\mu_{n}=n^{-1}\sum_{i\in a}\widetilde{Y}_{i}(t)$, where $\mu_{n}=n^{-1}\sum_{i\in a}\delta_{T_{i}}$ is the corresponding empirical distribution with $T_{1},\ldots,T_{n}$ an observed random sample from $\mu$. Despite properties of this estimator, it may not have the smallest variance in this kind of framework. Therefore, incorporating prior functional auxiliary information the variance of an asymptotically unbiased functional estimator can be reduced applying the maximum entropy on the mean principle.  

The philosophy of the principle consists in to enhance $\hat{\mu}_{\widetilde{Y}}(t)$ considering the maximum entropy on the mean functional estimator

\[
\hat{\mu}_{\widetilde{Y}}^{MEM}(t)=\int_{\chi}\widetilde{Y}(t)d\hat{\mu}_{n}^{MEM}=n^{-1}\sum_{i\in a}\hat{p}_{i}(t)\widetilde{Y}_{i}(t),\quad \text{for all } t\in\left[0,T\right],
\]
where $\hat{\mu}_{n}^{MEM}=n^{-1}\sum_{i\in a}\hat{p}_{i}(t)\delta_{T_{i}}$ is a weighted version of the empirical distribution $\mu_{n}$, with $\hat{\boldsymbol{p}}(t)=(\hat{p}_{1}(t),\ldots,\hat{p}_{n}(t))^{\top}$ given by the expectation of the independent $n$-dimensional stochastic process $\boldsymbol{P}(t)=(P_{1}(t),\ldots,P_{n}(t))^{\top}$ drawn from a posterior finite distribution $\nu^{\ast}$, $\hat{\boldsymbol{p}}(t)=\mathbb{E}_{\nu^{\ast}}\left[\boldsymbol{P}(t)\right]$ for all $t\in\left[0,T\right]$, where $\nu^{\ast}$ must to be close to a prior distribution $\upsilon$, which transmits the information that $\hat{\mu}_{n}^{MEM}$ must to be sufficiently close to $\mu_{n}$.

Therefore, the maximum entropy on the mean principle focuses on reconstructing the posterior measure $\nu^{\ast}$ that maximizes the entropy, over the convex set of all probability measures, $S(\nu\parallel\upsilon)=-D(\nu\parallel\upsilon)$ subject to the linear functional constraint holds in mean,
\[
\mathbb{E}_{\nu^{\ast}}\left[n^{-1}\sum_{i\in a}P_{i}(t)\widetilde{\boldsymbol{X}}_{i}(t)\right]=\boldsymbol{\mu}_{X}(t),\qquad\forall t\in\left[0,T\right].
\]

We recall that $D(\nu\parallel\upsilon)$ is the $I$-divergence or relative divergence or Kullbach-Leibler information divergence between a feasible finite measure $\nu$ with respect to a given prior measure $\upsilon$ (see, for instance, \citet{Csiszar-75}) defined by
\[
D(\nu\parallel\upsilon)=
\begin{cases}
\int_{\Omega}\log\Bigl(\frac{d\nu}{d\upsilon}\Bigr)d\nu-\nu(\Omega)+1 & \text{if }\nu\ll\upsilon\\
+\infty & \text{otherwise.}
\end{cases}
\]

To establish the connection between calibration and maximum entropy on the mean approaches the following notation 
is adopted $\widetilde{Y}_{i}(t)=N^{-1}nd_{i}Y_{i}(t)$, $\widetilde{\boldsymbol{X}}_{i}(t)=N^{-1}nd_{i}\boldsymbol{X}_{i}(t)$
and $p_{i}(t)=\pi_{i}w_{i}(t)$, such that the functional Horvitz-Thompson estimator of $\mu_{Y}(t)$ and the functional calibration constrain~\eqref{equ1} can be, respectively, expressed as 
\[
\hat{\mu}_{Y}^{HT}(t)=N^{-1}\sum_{i\in a}d_{i}Y_{i}(t)=n^{-1}\sum_{i\in a}\widetilde{Y}_{i}(t)
\]
and 
\[
n^{-1}\sum_{i\in a}p_{i}(t)\widetilde{\boldsymbol{X}}_{i}(t)=N^{-1}\sum_{i\in a}w_{i}(t)\boldsymbol{X}_{i}(t)=\boldsymbol{\mu}_{X}(t),\qquad\forall t\in\left[0,T\right].
\]

Hence, the functional calibration estimation problem follows the structure of the maximum entropy on the mean principle, where the corresponding estimator is defined by
\[
\hat{\mu}_{Y}^{MEM}(t)=n^{-1}\sum_{i\in a}\hat{p}_{i}(t)\widetilde{Y}_{i}(t)=N^{-1}\sum_{i\in a}\hat{w}_{i}(t)Y_{i}(t).
\]

The functional calibration weighting vector $\hat{\boldsymbol{p}}(t)$ with coordinates $\hat{p}_{i}(t)=\pi_{i}\hat{w}_{i}(t)$ for $i\in a$, is the expectation of the $n$-dimensional stochastic process $\boldsymbol{P}(t)$ with coordinates $P_{i}(t)=\pi_{i}W_{i}(t)$, drawn from $\nu^{\ast}$,
\[
\hat{\boldsymbol{p}}(t)=\mathbb{E}_{\nu^{\ast}}\bigl[\boldsymbol{P}(t)\bigr],\qquad\forall t\in\left[0,T\right],
\]
where the posterior measure $\nu^{\ast}=\otimes_{i\in a}\nu_{i}^{\ast}$ (by the independence of $P_{i}$'s) maximizes the entropy $S(\cdot\parallel\upsilon)$ subject to the calibration constraint is fulfilled in mean,
\[
\mathbb{E}_{\nu^{\ast}}\left[n^{-1}\sum_{i\in a}P_{i}(t)\widetilde{\boldsymbol{X}}_{i}(t)\right]=\mathbb{E}_{\nu^{\ast}}\left[N^{-1}\sum_{i\in a}W_{i}(t)\boldsymbol{X}_{i}(t)\right]=\boldsymbol{\mu}_{X}(t),\quad\forall t\in\left[0,T\right].
\]

Note that as $p_{i}(t)=\pi_{i}w_{i}(t)$ and $\hat{w}_{i}(t)$ must to be sufficiently close to $d_{i}$, then the $\hat{p}_{i}(t)$ must be close enough to 1 for each $i\in a$.

\subsection{Reconstruction of the measure $\nu^{\ast}$}\label{subsection3.1}

For simplicity and without loss generality we assume that $T=1$. The posterior distribution $\nu^{\ast}$ can be reconstructed adopting the random measure approach for infinite dimensional inverse problems explained in detail by \citet{Gzyl-Velasquez-11}. To do this, we express the calibration constraint~\eqref{equ1} as an infinite dimensional linear inverse problem writing $w_{i}(t)$ as
\[
w_{i}(t)=\int_{0}^{1}K(s,t)\varpi_{i}\left(s\right)ds+d_{i}\quad\text{for each } i\in a,
\]
where $K(s,t)$ is a known continuous, real-valued and bounded kernel function and $\varpi_{i}=\mathbb{E}_{\nu}\left[\mathcal{W}_{i}\left(s\right)\right]$, where $\mathcal{W}$ is a stochastic process. 

Hence, the infinite dimensional inverse problem, which takes the form of a Fredholm integral equation of the first kind, is 
\begin{equation}\label{restriction}
\begin{split}
\mathbb{E}_{\nu}\left[\mathcal{K}\mathcal{W}\right]
&=\mathbb{E}_{\nu}\left\lbrace\sum_{i\in a}\left[\int_{0}^{1}K(s,t)\mathcal{W}_{i}\left(s\right)ds+d_{i}\right]\boldsymbol{X}_{i}(t)\right\rbrace\\
&=\int_{0}^{1}\sum_{i\in a}K(s,t)\boldsymbol{X}_{i}(t)\varpi_{i}\left(s\right)ds+\sum_{i\in a}d_{i}\boldsymbol{X}_{i}(t)\\
&=N\boldsymbol{\mu}_{X}(t),\qquad t\in\left[0,1\right].
\end{split}
\end{equation}

To obtain the functions $\varpi_{i}^{\ast}\left(s\right)$ that solve the integral equation $\mathbb{E}_{\nu}\left[\mathcal{K}\mathcal{W}\right]=N\boldsymbol{\mu}_{X}(t)$, the random measure approach adopted considers $\varpi_{i}\left(s\right)$ as a density of a measure $\varpi_{i}\left(s\right)ds$, $i\in a$. Under this setting, we define the random measure $\mathcal{W}_{i}\left(a,b\right]=\mathcal{W}_{i}(b)-\mathcal{W}_{i}(a)$ for $\left(a,b\right]\subset\left[0,1\right]$ such that $dE_{\nu}\left\lbrace \mathcal{W}_{i}\left(0,s\right]\right\rbrace =\varpi_{i}(s)ds$ for each $i\in a$. The next theorem ensures the existence of the posterior distribution $\nu^{\ast}$ to obtain the functions $\varpi_{i}^{\ast}\left(s\right)$ depending on the assumed prior distribution $\upsilon$.

\begin{theorem}\label{theorem.2} Let $\upsilon$ be a prior positive probability measure, $\boldsymbol{\lambda}=\boldsymbol{\lambda}(t)$ a measure in the class of continuous measures on $\left[0,1\right]^{q}$, $\mathcal{M}\left(C\left[0,1\right]^{q}\right)$, and $\mathcal{V}=\left\lbrace \nu\ll\upsilon\colon Z_{\upsilon}(\boldsymbol{\lambda})<+\infty\right\rbrace$ a nonempty open class, where $Z_{\upsilon}(\boldsymbol{\lambda})=\mathbb{E}_{\upsilon}\left[\exp\left\lbrace \langle\boldsymbol{\lambda},\mathcal{K}\mathcal{W}\rangle\right\rbrace \right]$, with
\begin{equation}\label{innerproduct}
\left\langle \boldsymbol{\lambda},\mathcal{KW}\right\rangle =\int_{0}^{1}\boldsymbol{\lambda}^{\top}(dt)\left(\int_{0}^{1}\sum_{i\in a}K(s,t)\boldsymbol{X}_{i}(t)d\mathcal{W}_{i}(s)+\sum_{i\in a}d_{i}\boldsymbol{X}_{i}(t)\right).
\end{equation}
Then there exists an unique probability measure 
\[
\nu^{\ast}=\argmax_{\mathcal{\nu\in V}}S(\nu\parallel\upsilon),
\]
subject to $E_{\nu}\left[\mathcal{K}\mathcal{W}\right]=N\boldsymbol{\mu}_{X}(t)$,
which is achieved at
\[
d\nu^{\ast}/d\upsilon=Z_{\upsilon}^{-1}(\boldsymbol{\lambda}^{\ast})\exp\left\lbrace \langle\boldsymbol{\lambda}^{\ast},\mathcal{KW}\rangle\right\rbrace,
\]
where $\boldsymbol{\lambda}^{\ast}(t)$ minimizes the functional 
\[
H_{\upsilon}(\boldsymbol{\lambda})=\log Z_{\upsilon}(\boldsymbol{\lambda})-\langle\boldsymbol{\lambda},N\boldsymbol{\mu}_{X}\rangle.
\]
\end{theorem}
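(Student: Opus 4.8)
The plan is to treat the statement as an instance of maximum-entropy convex duality, transported to the present infinite-dimensional setting. Since $S(\nu\parallel\upsilon)=-D(\nu\parallel\upsilon)$, maximizing the entropy amounts to minimizing $D(\nu\parallel\upsilon)$ over the convex set of probability measures $\nu\in\mathcal{V}$ satisfying the affine constraint $\mathbb{E}_{\nu}[\mathcal{KW}]=N\boldsymbol{\mu}_{X}(t)$. I would first record that $\nu\mapsto D(\nu\parallel\upsilon)$ is strictly convex on this feasible set while the constraint is affine; consequently a minimizer, once shown to exist, is automatically unique, which disposes of the uniqueness claim.

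The heart of the argument is a Gibbs tilting identity. For each admissible $\boldsymbol{\lambda}$ with $Z_{\upsilon}(\boldsymbol{\lambda})<+\infty$, introduce the tilted measure $\nu_{\boldsymbol{\lambda}}$ defined by $d\nu_{\boldsymbol{\lambda}}/d\upsilon=Z_{\upsilon}^{-1}(\boldsymbol{\lambda})\exp\{\langle\boldsymbol{\lambda},\mathcal{KW}\rangle\}$, which is exactly the candidate appearing in the statement; the normalization $Z_{\upsilon}(\boldsymbol{\lambda})$ makes $\nu_{\boldsymbol{\lambda}}$ a probability measure. Using the chain rule for Radon--Nikodym derivatives, $\log(d\nu/d\upsilon)=\log(d\nu/d\nu_{\boldsymbol{\lambda}})+\langle\boldsymbol{\lambda},\mathcal{KW}\rangle-\log Z_{\upsilon}(\boldsymbol{\lambda})$, and integrating against any feasible probability measure $\nu\ll\upsilon$ one obtains the decomposition $D(\nu\parallel\upsilon)=D(\nu\parallel\nu_{\boldsymbol{\lambda}})+\langle\boldsymbol{\lambda},\mathbb{E}_{\nu}[\mathcal{KW}]\rangle-\log Z_{\upsilon}(\boldsymbol{\lambda})$. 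Inserting the constraint $\mathbb{E}_{\nu}[\mathcal{KW}]=N\boldsymbol{\mu}_{X}$ collapses the last two terms into $-H_{\upsilon}(\boldsymbol{\lambda})$, so that $D(\nu\parallel\upsilon)=D(\nu\parallel\nu_{\boldsymbol{\lambda}})-H_{\upsilon}(\boldsymbol{\lambda})$. Since the $I$-divergence is nonnegative, this yields the weak-duality bound $D(\nu\parallel\upsilon)\geq-H_{\upsilon}(\boldsymbol{\lambda})$, valid for every feasible $\nu$ and every admissible $\boldsymbol{\lambda}$.

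To close the duality gap I would exploit the structure of $H_{\upsilon}$. The map $\boldsymbol{\lambda}\mapsto\log Z_{\upsilon}(\boldsymbol{\lambda})$ is the log-Laplace transform (cumulant generating functional) of $\mathcal{KW}$ under $\upsilon$, hence convex, so $H_{\upsilon}$ is convex on the open set $\mathcal{V}$. Differentiating under the expectation---legitimate on the interior of $\mathcal{V}$ by a dominated-convergence argument---the G\^{a}teaux derivative of $H_{\upsilon}$ at $\boldsymbol{\lambda}$ in a direction $\boldsymbol{h}$ equals $\langle\boldsymbol{h},\mathbb{E}_{\nu_{\boldsymbol{\lambda}}}[\mathcal{KW}]-N\boldsymbol{\mu}_{X}\rangle$. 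Thus a minimizer $\boldsymbol{\lambda}^{\ast}$ of $H_{\upsilon}$ is characterized by the stationarity condition $\mathbb{E}_{\nu_{\boldsymbol{\lambda}^{\ast}}}[\mathcal{KW}]=N\boldsymbol{\mu}_{X}$, i.e.\ precisely the statement that the tilted measure $\nu_{\boldsymbol{\lambda}^{\ast}}$ is itself feasible. Substituting $\boldsymbol{\lambda}=\boldsymbol{\lambda}^{\ast}$ and $\nu=\nu_{\boldsymbol{\lambda}^{\ast}}$ into the decomposition forces $D(\nu_{\boldsymbol{\lambda}^{\ast}}\parallel\nu_{\boldsymbol{\lambda}^{\ast}})=0$, so $\nu_{\boldsymbol{\lambda}^{\ast}}$ attains the weak-duality lower bound and is therefore the unique maximizer $\nu^{\ast}$, with the Radon--Nikodym density displayed in the statement.

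The step I expect to be the main obstacle is the infinite-dimensional analysis underlying the dual problem: establishing existence of the minimizer $\boldsymbol{\lambda}^{\ast}$ inside the open class $\mathcal{V}\subset\mathcal{M}(C[0,1]^{q})$, and rigorously justifying the interchange of differentiation and integration when $\mathcal{KW}$ takes values in a function space rather than in $\mathbb{R}^{q}$. Here I would rely on the openness of $\mathcal{V}$ to secure local boundedness and uniform integrability of $\exp\{\langle\boldsymbol{\lambda},\mathcal{KW}\rangle\}$ near $\boldsymbol{\lambda}^{\ast}$, together with the convexity (and essential strict convexity in the directions effectively probing $\mathcal{KW}$) of $H_{\upsilon}$ to obtain existence and the relevant uniqueness of $\boldsymbol{\lambda}^{\ast}$, following the random-measure formalism of \citet{Gzyl-Velasquez-11}; the remaining steps are then the routine convex-duality computations sketched above.
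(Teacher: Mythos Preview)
Your proposal is correct in spirit and reproduces the standard convex-duality argument behind exponential-family/Gibbs tilting; the paper, however, does not supply any argument of its own---its entire proof of Theorem~\ref{theorem.2} consists of a citation to Csisz\'ar (1984, Theorem~3, p.~775). So what you have written is not a different route but rather an explicit sketch of the result the paper merely invokes, and your identification of the Gibbs decomposition $D(\nu\parallel\upsilon)=D(\nu\parallel\nu_{\boldsymbol{\lambda}})-H_{\upsilon}(\boldsymbol{\lambda})$ together with the stationarity condition $\mathbb{E}_{\nu_{\boldsymbol{\lambda}^{\ast}}}[\mathcal{KW}]=N\boldsymbol{\mu}_{X}$ is exactly what that citation encapsulates.

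The one caveat worth flagging is that the infinite-dimensional existence and differentiability issues you single out at the end are not addressed by the paper either; they are simply absorbed into the hypothesis that $\mathcal{V}$ is nonempty and open and into the reference to \citet{Gzyl-Velasquez-11}. In other words, your honest acknowledgment of this gap matches the paper's level of rigor rather than falling short of it.
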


Based on the Theorem \ref{theorem.2}, we will carry out the reconstruction of $\nu$, assuming the centered Gaussian and compound Poisson random measures as prior measures, in order to estimate the respective functional calibration weights $\hat{w}_{i}(t)$, $i\in a$. The estimates are given by the following two Lemmas.

\begin{lemma}\label{lemma.1} Let $\upsilon$ be a centered stationary Gaussian measure on $\bigl(\mathcal{C}(\left[0,1\right]),\mathcal{B}(\mathcal{C}(\left[0,1\right]))\bigr)$, and $\boldsymbol{\lambda}=\boldsymbol{\lambda}(t)\in\mathcal{M}\left(C\left[0,1\right]^{q}\right)$. Then, $\hat{w}_{i}(t)=\int_{0}^{1}K(s,t)\varpi^{\ast}(s)ds+d_{i}$ $i\in a$, where
\[
\varpi^{\ast}(s)=\sum_{i'\in a}\int_{0}^{1}K(s,t')\boldsymbol{X}_{i'}^{\top}(t')\boldsymbol{\lambda}^{\ast}(dt').
\]
\end{lemma}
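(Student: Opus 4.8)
The plan is to specialize the abstract characterization of $\nu^{\ast}$ furnished by Theorem~\ref{theorem.2} to the Gaussian prior and then simply read off the posterior mean density $\varpi^{\ast}$. First I would rewrite the pairing~\eqref{innerproduct} as a single stochastic integral against the driving process. Applying Fubini to the double integral in~\eqref{innerproduct}, the $\mathcal{W}$-dependent part of $\langle\boldsymbol{\lambda},\mathcal{KW}\rangle$ becomes $\int_{0}^{1}\Psi_{\boldsymbol{\lambda}}(s)\,d\mathcal{W}(s)$, where
\[
\Psi_{\boldsymbol{\lambda}}(s)=\sum_{i\in a}\int_{0}^{1}K(s,t)\boldsymbol{X}_{i}^{\top}(t)\boldsymbol{\lambda}(dt),
\]
while the term $\sum_{i\in a}d_{i}\boldsymbol{X}_{i}(t)$ contributes only a deterministic constant $\langle\boldsymbol{\lambda},\sum_{i\in a}d_{i}\boldsymbol{X}_{i}\rangle$ that factors out of every expectation. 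Because $\upsilon$ is a single centered stationary Gaussian law on $\mathcal{C}([0,1])$, the pairing is governed by one common process $\mathcal{W}$; this is precisely what makes $\varpi^{\ast}$ independent of the index $i$ and what produces the sum over $i'$ appearing in the statement.

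Next I would compute $Z_{\upsilon}(\boldsymbol{\lambda})$ explicitly. Since $\int_{0}^{1}\Psi_{\boldsymbol{\lambda}}(s)\,d\mathcal{W}(s)$ is a centered Gaussian variable under $\upsilon$, the Gaussian Laplace transform gives
\[
\log Z_{\upsilon}(\boldsymbol{\lambda})=\langle\boldsymbol{\lambda},\textstyle\sum_{i\in a}d_{i}\boldsymbol{X}_{i}\rangle+\tfrac{1}{2}\,\mathrm{Var}_{\upsilon}\!\left(\int_{0}^{1}\Psi_{\boldsymbol{\lambda}}(s)\,d\mathcal{W}(s)\right),
\]
so that $H_{\upsilon}$ is an affine-plus-quadratic functional of $\boldsymbol{\lambda}$ whose unique minimizer $\boldsymbol{\lambda}^{\ast}$ exists on the open set $\mathcal{V}$ by Theorem~\ref{theorem.2}. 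The central step is then the Gaussian tilting identity: exponentially tilting the centered Gaussian $\upsilon$ by $\exp\{\int_{0}^{1}\Psi_{\boldsymbol{\lambda}^{\ast}}(s)\,d\mathcal{W}(s)\}$, as dictated by $d\nu^{\ast}/d\upsilon=Z_{\upsilon}^{-1}(\boldsymbol{\lambda}^{\ast})\exp\{\langle\boldsymbol{\lambda}^{\ast},\mathcal{KW}\rangle\}$, leaves the covariance unchanged and merely shifts the mean. Concretely, by the Cameron--Martin theorem the increments of $\mathcal{W}$ acquire the mean $d\mathbb{E}_{\nu^{\ast}}\{\mathcal{W}(0,s]\}=\Psi_{\boldsymbol{\lambda}^{\ast}}(s)\,ds$ under the stationary (Brownian-type) covariance, i.e. $\varpi^{\ast}(s)=\Psi_{\boldsymbol{\lambda}^{\ast}}(s)$, which is exactly the claimed expression. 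Substituting this into the representation $w_{i}(t)=\int_{0}^{1}K(s,t)\varpi^{\ast}(s)\,ds+d_{i}$ then yields the stated $\hat{w}_{i}(t)$.

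The main obstacle I anticipate is the rigorous justification of the tilting identity in infinite dimensions. I must verify that the stochastic integral $\int_{0}^{1}\Psi_{\boldsymbol{\lambda}^{\ast}}(s)\,d\mathcal{W}(s)$ is well defined (using continuity and boundedness of $K$ together with $\boldsymbol{\lambda}\in\mathcal{M}(C[0,1]^{q})$ and continuity of the $\boldsymbol{X}_{i}$), that the tilted measure is again Gaussian, and above all that the stationarity of $\upsilon$ forces its covariance to act as the identity on the relevant test functions, so that the posterior mean density is $\Psi_{\boldsymbol{\lambda}^{\ast}}$ with no residual covariance operator intervening. As an independent consistency check, differentiating $\log Z_{\upsilon}$ in $\boldsymbol{\lambda}$ and matching with the constraint $\mathbb{E}_{\nu^{\ast}}[\mathcal{KW}]=N\boldsymbol{\mu}_{X}$ confirms that the $\varpi^{\ast}$ obtained in this way does satisfy the calibration equation, which closes the argument.
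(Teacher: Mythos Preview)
Your argument is correct and reaches the same conclusion, but the route differs from the paper's. The paper does not invoke Cameron--Martin or any Gaussian-tilting identity; it proceeds purely variationally. After computing $Z_{\upsilon}(\boldsymbol{\lambda})$ via the Gaussian moment-generating formula (exactly as you do), it writes out $H_{\upsilon}(\boldsymbol{\lambda})=\log Z_{\upsilon}(\boldsymbol{\lambda})-\langle\boldsymbol{\lambda},N\boldsymbol{\mu}_{X}\rangle$ explicitly as a quadratic-plus-linear functional in $\boldsymbol{\lambda}$, takes the first-order stationarity condition, and then algebraically regroups the resulting equation into the form
\[
\sum_{i\in a}\Bigl[\int_{0}^{1}K(s,t)\Bigl(\sum_{i'\in a}\int_{0}^{1}K(s,t')\boldsymbol{X}_{i'}^{\top}(t')\boldsymbol{\lambda}^{\ast}(dt')\Bigr)ds+d_{i}\Bigr]\boldsymbol{X}_{i}(t)=N\boldsymbol{\mu}_{X}(t).
\]
Matching this against the calibration constraint~\eqref{restriction} identifies the bracket with $\hat w_i(t)$ and hence the inner factor with $\varpi^{\ast}(s)$. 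In other words, what you list at the end as an ``independent consistency check'' is the paper's entire argument. Your Cameron--Martin route is conceptually cleaner about \emph{why} the posterior mean density equals $\Psi_{\boldsymbol{\lambda}^{\ast}}$, but it requires the infinite-dimensional shift theorem that you yourself flag as the main obstacle; the paper's computation is more elementary (just calculus on the dual) and delivers as a byproduct the explicit equation characterizing $\boldsymbol{\lambda}^{\ast}$, which is then reused in the discretized approximation of Section~\ref{sec:4}.
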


\begin{lemma}\label{lemma.2} Let $\mathcal{W}_{i}(s)=\sum_{k=1}^{N(s)}\xi_{ik}$ be a compound Poisson process, where $N(s)$ is a homogeneous Poisson process on $\left[0,1\right]$ with intensity parameter $\gamma>0$, and $\xi_{ik}$, $k\geq 1$ are independent and identically distributed real-valued random variables for each $i\in a$ with distribution $u$ on $\mathbb{R}$ satisfying $u(\left\{ 0\right\} )=0$, and independent of $N(s)$. 
Then, $\hat{w}_{i}(t)=\int_{0}^{1}K(s,t)\varpi^{\ast}(s)ds+d_{i}$ $i\in a$, where
\[
\varpi_{i}^{\ast}(s)=\int_{\mathbb{R}}\xi_{i}\exp\left\{ \sum_{i\in a}\int_{0}^{1}K(s,t)\xi_{i}\boldsymbol{X}_{i}^{\top}(t)\boldsymbol{\lambda}^{\ast}(dt)\right\} u\left(d\xi_{i}\right).
\]
\end{lemma}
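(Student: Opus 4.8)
The plan is to specialize the general characterization of Theorem~\ref{theorem.2} to the compound Poisson prior and then read off the reconstructed density $\varpi_i^\ast$ as the first moment of the exponentially tilted random measure $\nu^\ast$. Recall that, by the defining relation $dE_\nu\{\mathcal{W}_i(0,s]\}=\varpi_i(s)\,ds$, obtaining $\varpi_i^\ast$ amounts to computing the mean measure of $\mathcal{W}_i$ under $\nu^\ast$. Since Theorem~\ref{theorem.2} gives $d\nu^\ast/d\upsilon = Z_\upsilon^{-1}(\boldsymbol{\lambda}^\ast)\exp\{\langle\boldsymbol{\lambda}^\ast,\mathcal{KW}\rangle\}$, everything reduces to two ingredients: an explicit formula for the partition function $Z_\upsilon(\boldsymbol{\lambda})$, and the effect of the exponential tilt on the law of the compound Poisson process.

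First I would rewrite the pairing~\eqref{innerproduct}, separating the deterministic part $\sum_{i\in a}d_i\boldsymbol{X}_i(t)$ (which contributes a constant factor independent of $\mathcal{W}$) from the stochastic part, and putting the latter in the form $\sum_{i\in a}\int_0^1 g_i(s)\,d\mathcal{W}_i(s)$ with the scalar kernels $g_i(s)=\int_0^1 K(s,t)\boldsymbol{X}_i^\top(t)\,\boldsymbol{\lambda}(dt)$. Viewing $\mathcal{W}=(\mathcal{W}_i)_{i\in a}$ as the marked Poisson process driven by the common homogeneous intensity $\gamma$ with mark law $u$, the exponential (Campbell) formula for Poisson random measures yields a closed L\'evy--Khintchine expression for the log-partition functional,
\[
\log Z_\upsilon(\boldsymbol{\lambda})=\mathrm{const}(\boldsymbol{\lambda})+\gamma\int_0^1\Bigl(\mathbb{E}_u\bigl[e^{\sum_{i\in a}g_i(s)\xi_i}\bigr]-1\Bigr)\,ds ,
\]
the constant being $\langle\boldsymbol{\lambda},\sum_{i\in a}d_i\boldsymbol{X}_i\rangle$. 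The finiteness required here is exactly the hypothesis $\boldsymbol{\lambda}\in\mathcal{V}$, i.e. $Z_\upsilon(\boldsymbol{\lambda})<+\infty$, which also supplies the finite exponential moments of $u$ used below.

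Next I would identify $\nu^\ast$ as an Esscher transform of the prior: tilting the Poisson random measure of jumps by $\exp\{\sum_{i\in a}g_i^\ast(s)\xi_i\}$, with $g_i^\ast(s)=\int_0^1 K(s,t)\boldsymbol{X}_i^\top(t)\boldsymbol{\lambda}^\ast(dt)$, produces again a Poisson random measure whose time/mark intensity is multiplied by this exponential weight. Taking the first moment of $\mathcal{W}_i$ under the tilted intensity and differentiating in the upper endpoint $s$ gives the density
\[
\varpi_i^\ast(s)=\gamma\int_{\mathbb{R}}\xi_i\,\exp\Bigl\{\textstyle\sum_{i\in a}\int_0^1 K(s,t)\,\xi_i\,\boldsymbol{X}_i^\top(t)\,\boldsymbol{\lambda}^\ast(dt)\Bigr\}\,u(d\xi_i),
\]
which is the announced expression; substituting it into $w_i(t)=\int_0^1 K(s,t)\varpi_i^\ast(s)\,ds+d_i$ completes the proof. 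Equivalently, and perhaps more transparently, the same formula follows by taking the G\^ateaux derivative of $\log Z_\upsilon$ at $\boldsymbol{\lambda}^\ast$ in the direction dual to $\mathcal{W}_i(0,s]$, invoking that $\boldsymbol{\lambda}^\ast$ minimizes $H_\upsilon$ and hence satisfies the stationarity relation $\nabla\log Z_\upsilon(\boldsymbol{\lambda}^\ast)=N\boldsymbol{\mu}_X$, i.e. the calibration constraint holds in mean.

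I expect the main obstacle to be the rigorous justification of interchanging differentiation (in $s$ and in the direction $\boldsymbol{\lambda}$) with the integration defining $Z_\upsilon$, together with the validity of the exponential change of measure for the compound Poisson process; both hinge on dominated-convergence bounds supplied by the integrability condition $\boldsymbol{\lambda}^\ast\in\mathcal{V}$ and the assumption $u(\{0\})=0$, which rules out degenerate jumps. The remaining computations -- the explicit Campbell formula and the factorization of $\mathbb{E}_u[\cdot]$ over the independent marks -- are routine once these analytic points are secured, and run in close parallel to the Gaussian computation underlying Lemma~\ref{lemma.1}.
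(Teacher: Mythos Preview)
Your proposal is correct and follows essentially the same path as the paper: decompose $\langle\boldsymbol{\lambda},\mathcal{KW}\rangle$ into the deterministic term $\langle\boldsymbol{\lambda},\sum_{i\in a}d_i\boldsymbol{X}_i\rangle$ and the stochastic integral $\int_0^1 g(s)\,d\mathcal{W}_i(s)$, evaluate $Z_\upsilon(\boldsymbol{\lambda})$ via the L\'evy--Khintchine/Campbell formula for compound Poisson processes, and then extract $\varpi_i^\ast$ by differentiation. The only cosmetic difference is that the paper obtains $\varpi_i^\ast$ by writing out the first-order condition for the minimizer $\boldsymbol{\lambda}^\ast$ of $H_\upsilon$ and matching it term-by-term against the calibration constraint~\eqref{restriction}, which is precisely your ``alternative'' G\^ateaux-derivative route; your ``primary'' Esscher-transform computation of the mean under $\nu^\ast$ is an equivalent and arguably cleaner way to reach the same formula. (Note that your expression carries the intensity factor $\gamma$, consistent with the paper's discretized version~\eqref{varpi.poisson}, whereas the statement of the lemma omits it---this is an inconsistency in the paper, not in your argument.)
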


\section{Approximation of the maximum entropy on the mean functional estimator}
\label{sec:4}

To approximate the functional calibration weights and the functional maximum entropy on the mean estimator for the finite population mean of $Y(t)$ with the assumed prior measure, an Euler discretization scheme is used. Consider a partition of $(s,t)\in\left[0,1\right]^{2}$ in ${J}$ and $L$ equidistant fixed points, $(j-1)/J<s_{j}\leq j/J$, $j=1,\ldots,J$, $(l-1)/L<t_{l}\leq l/L$, $l=1,\ldots,L$, respectively. For the corresponding prior measures, the approximations for functions $Z_{\upsilon}(\boldsymbol{\lambda})$, $H_{\upsilon}(\boldsymbol{\lambda})$ and $\boldsymbol{\lambda}^{\ast}(t)$ are based on the respective results found in the Appendix.

\subsection{Centered Gaussian measure}

For a prior centered Gaussian random measure, the approximations of the linear moment calibration constraint~\eqref{restriction} and the inner product $\langle\boldsymbol{\lambda},\mathcal{K}\mathcal{W}\rangle$ are, respectively, given by 
\[
\mathbb{E}_{\nu}\left[\sum_{j=1}^{J}\sum_{i\in a}K(s_{j},t_{l})\Delta\mathcal{W}_{i}(s_{j})\boldsymbol{X}_{i}(t_{l})+\sum_{i\in a}d_{i}\boldsymbol{X}_{i}(t_{l})\right]=N\boldsymbol{\mu}_{X}(t_{l})
\]
and 
\begin{align*}
 & \frac{1}{L}\sum_{l=1}^{L}\boldsymbol{\lambda}^{\top}(t_{l})\sum_{j=1}^{J}\sum_{i\in a}K(s_{j},t_{l})\Delta\mathcal{W}_{i}(s_{j})\boldsymbol{X}_{i}(t_{l})+\frac{1}{L}\sum_{l=1}^{L}\boldsymbol{\lambda}^{\top}(t_{l})\sum_{i\in a}d_{i}\boldsymbol{X}_{i}(t_{l})\\
 & =\frac{1}{L}\sum_{j=1}^{J}\sum_{i\in a}\sum_{l=1}^{L}K(s_{j},t_{l})\Delta\mathcal{W}_{i}(s_{j})\boldsymbol{\lambda}^{\top}(t_{l})\boldsymbol{X}_{i}(t_{l})+\frac{1}{L}\sum_{i\in a}d_{i}\sum_{l=1}^{L}\boldsymbol{\lambda}^{\top}(t_{l})\boldsymbol{X}_{i}(t_{l}),
\end{align*}
where $\Delta\mathcal{W}_{i}(s_{j})=\mathcal{W}_{i}(s_{j})-\mathcal{W}_{i}(s_{j-1})$
is the discrete version of $d\mathcal{W}_{i}(s)$ for $i\in a$.

Therefore, we have that $Z_{\upsilon}(\boldsymbol{\lambda})$ is approximated at the grid (see equation~\eqref{equ2} of the proof of Lemma 1 in the Appendix) by
\[
\begin{split}
& \mathbb{E}_{\upsilon}\left[\exp\left\lbrace \frac{1}{L}\sum_{i\in a}d_{i}\sum_{l=1}^{L}\boldsymbol{\lambda}^{\top}(t_{l})\boldsymbol{X}_{i}(t_{l})+\frac{1}{L}\sum_{j=1}^{J}\sum_{i\in a}\sum_{l=1}^{L}K(s_{j},t_{l})\boldsymbol{\lambda}^{\top}(t_{l})\boldsymbol{X}_{i}(t_{l})\Delta\mathcal{W}_{i}(s_{j})\right\rbrace \right]\\
 & =\exp\left\{ \frac{1}{L}\sum_{i\in a}d_{i}\sum_{l=1}^{L}\boldsymbol{\lambda}^{\top}(t_{l})\boldsymbol{X}_{i}(t_{l})+\sum_{j=1}^{J}\frac{1}{2J}\left(\frac{1}{L}\sum_{i\in a}\sum_{l=1}^{L}K(s_{j},t_{l})\boldsymbol{\lambda}^{\top}(t_{l})\boldsymbol{X}_{i}(t_{l})\right)^{2}\right\} \\
 & =\exp\left\{ \frac{1}{L}\sum_{i\in a}d_{i}\sum_{l=1}^{L}\boldsymbol{\lambda}^{\top}(t_{l})\boldsymbol{X}_{i}(t_{l})\right\} \prod_{j=1}^{J}\exp\left\{ \frac{1}{2J}\sum_{i\in a}\sum_{i'\in a}h_{i}(s_{j})h_{i'}(s_{j})\right\} \\
 & =\exp\left\{ \frac{1}{L}\sum_{i\in a}d_{i}\sum_{l=1}^{L}\boldsymbol{\lambda}^{\top}(t_{l})\boldsymbol{X}_{i}(t_{l})\right\} \prod_{j=1}^{J}z_{i}\left(h_{i}(s_{j})\right),
\end{split}
\]
where $h_{i}(s_{j})=L^{-1}\sum_{l=1}^{L}K(s_{j},t_{l})\boldsymbol{\lambda}^{\top}(t_{l})\boldsymbol{X}_{i}(t_{l})$,
$i\in a$, $j=1,\ldots J$, and $l=1,\ldots L$.

Now, the finite dimensional maxentropic solution for $\varpi_{i}(s_{j})$ for each $i\in a$ is approximated by (see \citet{Gzyl-Velasquez-11})
\begin{equation}\label{varpi.gauss}
\begin{split}
\varpi_{i}^{\ast}(s_{j})
& =\left.\frac{\text{d}\log z_{i}\left(h_{i}(s_{j})\right)}{\text{d}(2J)^{-1}h_{i}(s_{j})}\right|_{h_{i}(s_{j})=\mathcal{K}\boldsymbol{\lambda}^{\ast}}\\
 & =\left.\sum_{i'\in a}h_{i'}(s_{j})\right|_{h_{i}(s_{j})=\mathcal{K}\boldsymbol{\lambda}^{\ast}}\\
 & =\frac{1}{L}\sum_{l=1}^{L}\sum_{i'\in a}K(s_{j},t'_{l})\boldsymbol{\lambda}^{\ast\top}(t'_{l})\boldsymbol{X}_{i'}(t'_{l}),
\end{split}
\end{equation}
where the finite dimensional version of $\boldsymbol{\lambda}^{\ast}(t_{l}')$,
$(l-1)/L<t_{l}\leq l/L$, $l=1,\ldots,L$, is the minimizer of $H_{\upsilon}(\boldsymbol{\lambda})$,
whose approximation (see equation~\eqref{equ3} of the proof of Lemma 1 in the Appendix) is
\[
\begin{split} & \frac{1}{2}\sum_{l=1}^{L}\sum_{l=1}^{L}\boldsymbol{\lambda}^{\top}(t_{l})\left(\frac{1}{JL^{2}}\sum_{j=1}^{J}K(s_{j},t_{l})K(s_{j},t_{l}')\sum_{i\in a}\sum_{i'\in a}\boldsymbol{X}_{i}(t_{l})\boldsymbol{X}_{i'}^{\top}(t_{l}')\right)\boldsymbol{\lambda}(t_{l}')\\
 & \quad+\frac{1}{L}\sum_{l=1}^{L}\left(\sum_{i\in a}d_{i}\boldsymbol{X}_{i}^{\top}(t_{l})-N\boldsymbol{\mu}_{X}^{\top}(t_{l})\right)\boldsymbol{\lambda}(t_{l}).
\end{split}
\]
The first order condition (see equation\eqref{equ4}) associated to this minimization problem is 
\[
\begin{split}
\frac{1}{JL^{2}}\sum_{j=1}^{J}\sum_{l=1}^{L}K(s_{j},t_{l})K(s_{j},t_{l}')\sum_{i\in a}\sum_{i'\in a}\boldsymbol{X}_{i}(t_{l})\boldsymbol{X}_{i'}^{\top}(t_{l}')\boldsymbol{\lambda}^{\ast}(t_{l}')\\
+\frac{1}{L}\left(N\boldsymbol{\mu}_{X}(t_{l})-\sum_{i\in a}d_{i}\boldsymbol{X}_{i}(t_{l})\right) & =\boldsymbol{0},
\end{split}
\]
\noindent whose solution $\boldsymbol{\lambda}^{\ast}(t_{l}')$ is given by
\[
\begin{split}
\boldsymbol{\lambda^{\ast}}(t_{l}')  
 & =JL\left(\sum_{j=1}^{J}\sum_{l=1}^{L}K(s_{j},t_{l})K(s_{j},t_{l}')\sum_{i\in a}\sum_{i'\in a}\boldsymbol{X}_{i}(t_{l})\boldsymbol{X}_{i'}^{\top}(t_{l}')\right)^{-1}\\
 & \quad\;\;\,\times\left(N\boldsymbol{\mu}_{X}(t_{l})-\sum_{i\in a}d_{i}\boldsymbol{X}_{i}(t_{l})\right).
\end{split}
\]
Finally, the approximation of the finite dimensional solution of $\hat{w}_{i}(t)$ is 
\[
\begin{split}
\hat{w}_{i}(t_{l})
& =\frac{1}{J}\sum_{j=1}^{J}K(s_{j},t_{l})\varpi_{i}^{\ast}(s_{j})+ d_{i},
\end{split}
\]
where $\varpi_{i}^{\ast}(s_{j})$ es given by the equation~\eqref{varpi.gauss}.

\subsection{Compound Poisson measure}

Based on equations~\eqref{equ5} and~\eqref{equ6} of the proof of Lemma 2 in the Appendix, the approximation of $Z_{\upsilon}(\boldsymbol{\lambda})$ is given by 
\begin{align*}
 & \mathbb{E}_{\upsilon}\left[\exp\left\{ \langle g(s_{j}),d\mathcal{W}_{i}\rangle+\left\langle \boldsymbol{\lambda},\sum_{i\in a}d_{i}\boldsymbol{X}_{i}(t_{l})\right\rangle \right\} \right]\\
 & =\exp\left\{ \left\langle \boldsymbol{\lambda},\sum_{i\in a}d_{i}\boldsymbol{X}_{i}(t_{l})\right\rangle \right\} \mathbb{E}_{\upsilon}\left[\exp\left\{ \langle g(s_{j}),d\mathcal{W}_{i}\rangle\right\} \right]\\
 & =\exp\left\{ \left\langle \boldsymbol{\lambda},\sum_{i\in a}d_{i}\boldsymbol{X}_{i}(t_{l})\right\rangle \right\} \\
 & \quad\times\prod_{j=1}^{J}\exp\left\{ \frac{\gamma}{J}\int_{\mathbb{R}}\left(\exp\left\{ \frac{1}{L}\sum_{i\in a}\xi_{i}\sum_{l=1}^{L}K(s_{j},t_{l})\boldsymbol{\lambda}^{\top}(t_{l})\boldsymbol{X}_{i}(t_{l})\right\} -1\right)u\left(d\xi_{i}\right)\right\} \\
 & =\exp\left\{ \left\langle \boldsymbol{\lambda},\sum_{i\in a}d_{i}\boldsymbol{X}_{i}(t_{l})\right\rangle \right\} \prod_{j=1}^{J}\exp\left\{ \frac{\gamma}{J}\int_{\mathbb{R}}\left(\exp\left\{ \sum_{i\in a}\xi_{i}h_{i}(s_{j})\right\} -1\right)u\left(d\xi_{i}\right)\right\} \\
 & =\exp\left\{ \left\langle \boldsymbol{\lambda},\sum_{i\in a}d_{i}\boldsymbol{X}_{i}(t_{l})\right\rangle \right\} \prod_{j=1}^{J}z_{i}\left(h_{i}(s_{j})\right),\qquad i\in a,
\end{align*}
where $h_{i}(s_{j})=L^{-1}\sum_{l=1}^{L}K(s_{j},t_{l})\boldsymbol{\lambda}^{\top}(t_{l})\boldsymbol{X}_{i}(t)$, $i\in a$, $j=1,\ldots J$, and $\left\langle \boldsymbol{\lambda},\sum_{i\in a}d_{i}\boldsymbol{X}_{i}(t_{l})\right\rangle =L^{-1}\sum_{i\in a}d_{i}\sum_{l=1}^{L}\boldsymbol{\lambda}^{\top}(t_{l})\boldsymbol{X}_{i}(t_{l})$.

\noindent The approximated maxentropic solution for $\varpi_{i}(s_{j})$ for each $i\in a$ is 
\begin{equation}\label{varpi.poisson}
\begin{split}
\varpi_{i}^{\ast}(s_{j}) & =\left.\frac{\text{d}\log z_{i}\left(h_{i}(s_{j})\right)}{\text{d}h_{i}(s_{j})}\right|_{h_{i}(s_{j})=\mathcal{K}\boldsymbol{\lambda}^{\ast}}\\
 & =\left.\frac{\gamma}{J}\int_{\mathbb{R}}\xi_{i}\exp\left\{ \sum_{i\in a}\xi_{i}h_{i}(s_{j})\right\} u\left(d\xi_{i}\right)\right|_{h_{i}(s_{j})=\mathcal{K}\boldsymbol{\lambda}^{\ast}}\\
 & =\frac{\gamma}{J}\int_{\mathbb{R}}\xi_{i}\exp\left\{ \frac{1}{L}\sum_{i\in a}\sum_{l=1}^{L}K(s_{j},t_{l})\xi_{i}\boldsymbol{X}_{i}^{\top}(t_{l})\boldsymbol{\lambda}^{\ast}(t_{l})\right\} u\left(d\xi_{i}\right),
\end{split}
\end{equation}
where the finite dimensional version of $\boldsymbol{\lambda}^{\ast}(t_{l})$,
is the minimizer of $H_{\upsilon}(\boldsymbol{\lambda})$, whose approximation, by the equation~\eqref{equ7} of the proof of Lemma 2 in the Appendix, is
\[
\begin{split}H_{\upsilon}(\boldsymbol{\lambda})
 & =\frac{\gamma}{J}\sum_{j=1}^{J}\int_{\mathbb{R}}\left(\exp\left\{ \frac{1}{L}\sum_{i\in a}\sum_{l=1}^{L}K(s_{j},t_{l})\xi_{i}\boldsymbol{X}_{i}^{\top}(t_{l})\boldsymbol{\lambda}(t_{l})\right\} -1\right)u\left(d\xi_{i}\right)\\
 & \quad+\frac{1}{L}\sum_{l=1}^{L}\left(\sum_{i\in a}d_{i}\boldsymbol{X}_{i}^{\top}(t_{l})-N\boldsymbol{\mu}_{X}^{\top}(t_{l})\right)\boldsymbol{\lambda}(t_{l})
\end{split}
\]
The corresponding equation for $\boldsymbol{\lambda}^{\ast}(t_{l})$ that minimizes $H_{\upsilon}(\boldsymbol{\lambda})$ is given by the nonlinear system of equations (see equation~\eqref{equ8} in the Appendix)
\begin{align*}
\sum_{i\in a}\left[\frac{1}{J}\sum_{j=1}^{J}K(s_{j},t_{l})\left(\gamma L\int_{\mathbb{R}}\xi_{i}\exp\left\{ \frac{1}{L}\sum_{i\in a}\sum_{l=1}^{L}K(s_{j},t_{l})\xi_{i}\boldsymbol{X}_{i}^{\top}(t_{l})\boldsymbol{\lambda}^{\ast}(t_{l})\right\} u\left(d\xi_{i}\right)\right)+d_{i}\right]\\
\times\boldsymbol{X}_{i}(t_{l})=N\boldsymbol{\mu}_{X}(t_{l})
\end{align*}
Finally, as in the Gaussian measure case, the finite dimensional solution of $\hat{w}_{i}(t)$ is approximated by $\hat{w}_{i}(t_{l})=J^{-1}\sum_{j=1}^{J}K(s_{j},t_{l})\varpi_{i}^{\ast}(s_{j}) + d_{i}$ with $\varpi_{i}^{\ast}(s_{j})$ given by the equation~\eqref{varpi.poisson}.

\section{Simulation study}
\label{sec:5}

We shall illustrate through a simple simulation study the performance of results obtained in the above section. Considering a finite population $U_{N}$ of size $N=1000$, we generate a functional random variable
$Y_{i}(t)$ by the point-wise multiple linear regression model 
\[
Y_{i}(t)=\alpha(t)+\boldsymbol{X}_{i}(t)^{\top}\boldsymbol{\beta}(t)+\varepsilon_{i}(t),\qquad i\in U_{N},
\]
where $\alpha(t)=1.2+2.3\cos\left(2\pi t\right)+4.2\sin\left(2\pi t\right)$,
$\boldsymbol{\beta}(t)=\left(\beta_{1}(t),\beta_{2}(t)\right)^{\top}$
with $\beta_{1}(t)=\cos\left(10t\right)$ and $\beta_{2}(t)=t\sin\left(15t\right)$,
$\boldsymbol{X}_{i}(t)=\left(X_{i1}(t),X_{i2}(t)\right)^{\top}$,
and $\varepsilon_{i}(t)\sim\mathcal{N}\left(0,\sigma_{\varepsilon}^{2}(1+t)\right)$
with $\sigma_{\varepsilon}^{2}=0.1$, and independent of $\boldsymbol{X}_{i}(t)$.
The auxiliary functional covariates are defined by $X_{i1}(t)=\mathcal{U}_{i1}+f_{1}(t)$
with $f_{1}(t)=3\sin(3\pi t+3)$, and $X_{i2}(t)=\mathcal{U}_{i2}+f_{2}(t)$
with $f_{2}(t)=-\cos(\pi t)$, where $\mathcal{U}_{i1}$ and $\mathcal{U}_{i2}$
are independent and, respectively, i.i.d. uniform random variables
on the intervals $\left[-1,1.3\right]$ and $\left[-0.5,0.5\right]$.
The design time points for $t\in\left[0,1\right]$ and $s\in\left[0,1\right]$
are $t_{j}=j/J$, $j=1,\ldots,J$ and $s_{l}=l/L$, $l=1,\ldots,L$,
with $J=50$ and $L=80$ The Figures~\ref{fig.1} and \ref{fig.2}
show, respectively, the simulated finite population auxiliary functional
covariates and functional responses for each $i\in U_{N}$, and the
respective finite population functional means, $\boldsymbol{\mu}_{X}(t)=\left(\mu_{X_{1}}(t),\mu_{X_{2}}(t)\right)^{\top}$
and $\mu_{Y}(t)=N^{-1}\sum_{i\in U_{N}}Y_{i}(t)$. Assuming a uniform
fixed-size sampling design we drawn a sample $a\in U_{N}$ of $n=0.12N$
elements without replacement. For the kernel function we assumed a
Gaussian one, $K(t,s)=\exp\left\{ -\left|t-s\right|^{2}/2\sigma^{2}\right\} $
with $\sigma^{2}=0.5$. The random variables $\xi_{i}$ for the compound
Poisson case are assumed i.i.d. uniform on the interval $\left[-1,1\right]$,
and $\gamma=1$. To solve the nonlinear system of equations for $\boldsymbol{\lambda}^{\ast}(t_{l})$
in the compound Poisson case, we used the \texttt{R}-package \texttt{BB}
(see \citet{Varadhan-12} and \citet{Varadhan-Gilbert-09}).

\begin{figure}[!h]
	\centering
	\includegraphics[height=8cm,width=14cm]{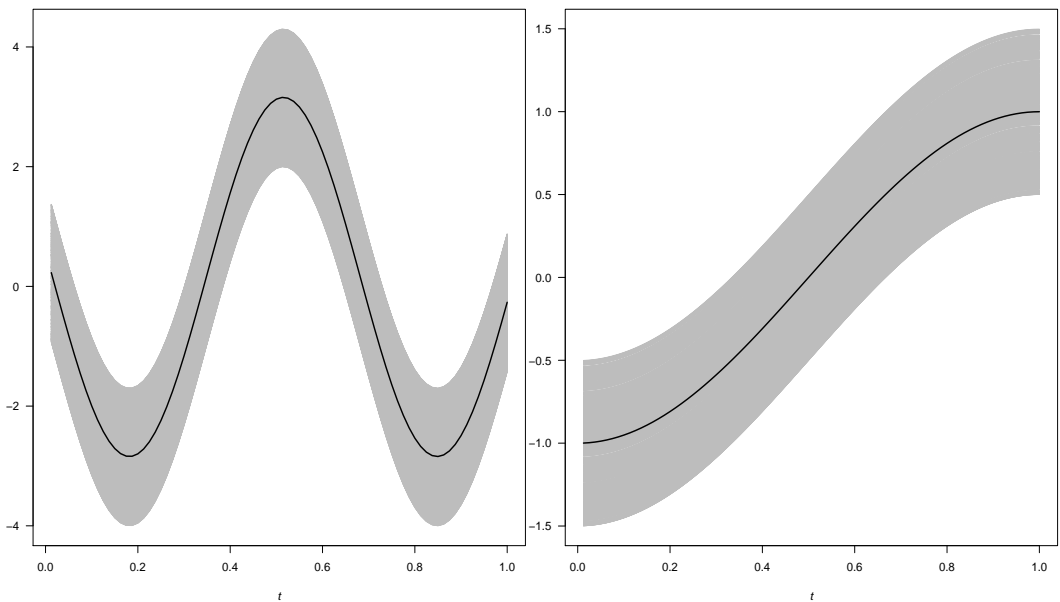}
	\caption{Population auxiliary functional variables (gray), $X_{i1}(t)$ (on the left) and $X_{i2}(t)$ (on the right). Functional finite population means, $\mu_{X_{1}}(t)$ and $\mu_{X_{2}}(t)$ (solid line)}
\label{fig.1}
\end{figure}

The graphical comparisons of the estimators for a random selected repetition are illustrated in the Figure~\ref{fig.2}. The figure shows, in general, a good performance, specially for the estimator
assuming the Gaussian measure. The principal differences with respect to the theoretical functional finite population mean are localized on the edges, particularly on the left edge. The Horvitz-Thompson estimator, in both cases, has a little departure localized around the deep valley. However our estimator has not this departure. A nice feature of the functional calibration method is that permits to check graphically how well the estimator satisfies the calibration constraints for each covariate, $N^{-1}\sum_{i\in a}\hat{w}_{i}(t)\boldsymbol{X}_{i}(t)=\boldsymbol{\mu}_{X}(t)$. This is illustrated in the Figure~\ref{fig.3}.
\begin{figure}
	\centering
	\includegraphics[height=8cm,width=14cm]{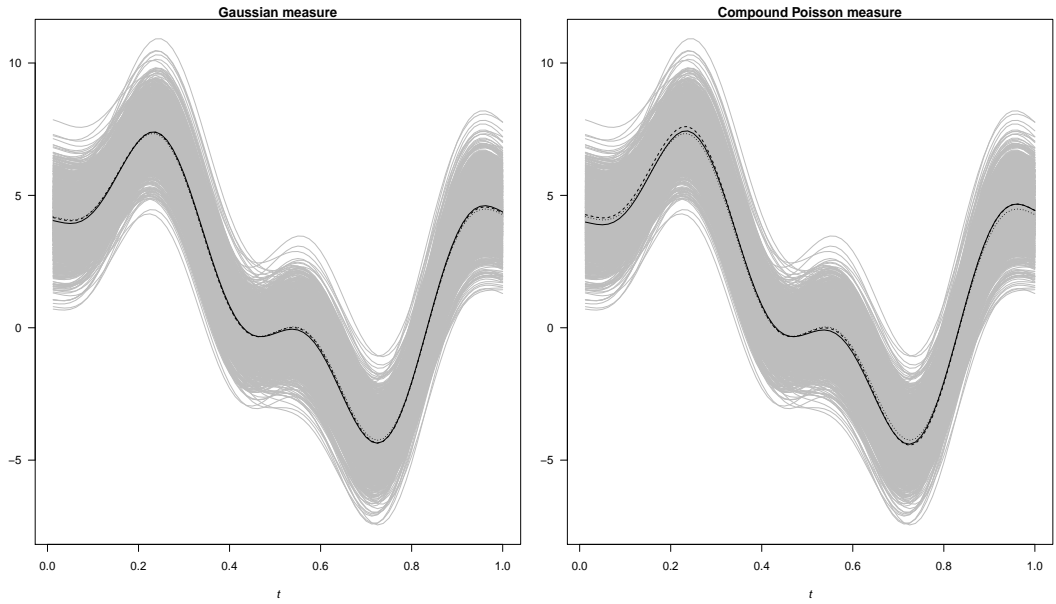}
	\caption{Population survey functions $Y_{i}(t)$ (in gray), functional finite population mean $\mu_{Y}(t)$ (solid line), and the Horvitz-Thompson (dotted line) and functional maximum entropy on the mean (dashed line) estimators}
\label{fig.2} 
\end{figure}
\begin{figure}
	\centering
	\includegraphics[height=8cm,width=14cm]{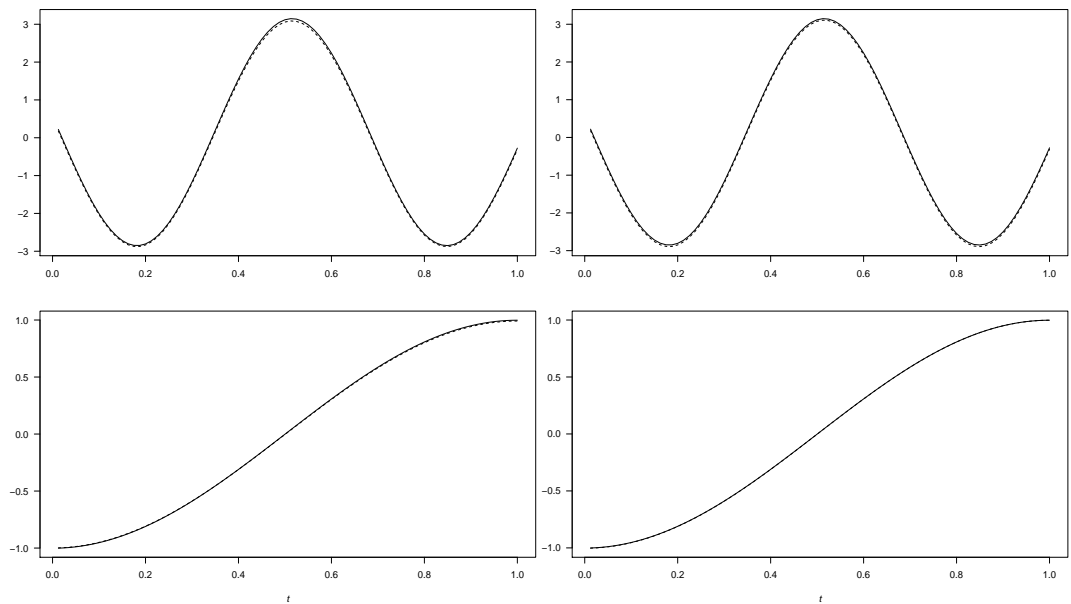}
	\caption{Functional calibration constraint~\eqref{equ1} for Gaussian (on the left) and compound Poisson (on the right) measures. $\boldsymbol{\mu}_{X}(t)$ (solid line), $N^{-1}\sum_{i\in a}\hat{w}_{i}(t)\boldsymbol{X}_{i}(t)$ (dash)}
\label{fig.3} 
\end{figure}

To evaluate the performance of the maximum entropic functional calibration estimator, $\hat{\mu}_{Y}^{MEM}(t)$, assuming the Gaussian and compound Poisson prior measures, we calculated its empirical bias\textendash{}variance decomposition of the mean square errors and compare it with the functional Horvitz-Thompson estimator $\hat{\mu}_{Y}^{HT}(t)$. The simulation study was conducted with 100 repetitions. In Table~\ref{Table1} we can see that, with respect to the Horvitz-Thompson estimator, the maximum entropic estimator has smaller variance and mean square error for both prior measures, particularly for the Gaussian prior. Although the Horvitz-Thompson estimator has smaller bias squared, the differences are not significant. Also, the small value for the bias confirm the unbiasedness of the functional maximum entropy on the mean and Horvitz-Thompson estimators.

\begin{table}
\caption{Bias-variance decomposition of MSE}
\label{Table1}
\centering{\small{
\begin{tabular}{llll} \hline \noalign{\smallskip}
Functional estimator	   			   &  MSE 	& $\text{Bias}^{2}$ & Variance\\\noalign{\smallskip}\hline\noalign{\smallskip}
Horvitz-Thompson  					   & 0.2391 & 0.0005 & 0.2386\\
Maximum entropy on the mean (Gaussian) & 0.2001 & 0.0006 & 0.1995\\
Maximum entropy on the mean (Poisson)  & 0.2333 & 0.0084 & 0.2249\\
\noalign{\smallskip}\hline
\end{tabular}}
}
\end{table}

\section{Concluding remarks}
\label{sec:6}

In this paper we have proposed an extension to the problem of obtaining an estimator for the finite population mean of a survey variable incorporating complete auxiliary information under an infinite dimensional setting. Considering that both the survey and the set of auxiliary variables are functions, the respective functional calibration constraint is expressed as an infinite dimensional linear inverse problem, whose solution offers the functional survey weights of the calibration estimator. The solution of the problem is conducted by mean the maximum entropy on the mean principle, which is a powerful probabilistic-based regularization method to solve constrained linear inverse problems. Here we assume a centered Gaussian and compound Poisson random measures as prior measures to obtain the functional calibration weights. However, other random measures can be considered also.

The simulations study results show that the proposed functional calibration estimator improves its accuracy compared with the Horvitz-Thompson estimator. In the simulations, both the functional survey and auxiliary variables where assumed with amplitude variations (variation in the $y$-axis) only. More complex extensions allowing both amplitude and phase (variation in the $x$-axis) variations are possible. 

Finally, a further interesting extension of the functional calibration estimation problem under the maximum entropy on the mean approach can be conducted following the idea of model-calibration proposed by \citet{Wu-Sitter-01}, \citet{Wu-03} and \citet{Montanari-Ranalli-05}. This may be accomplished considering a nonparametric functional regression $Y_{i}(t)=\mu\left\lbrace \boldsymbol{X}_{i}(t)\right\rbrace +\varepsilon_{i}(t)$, $i\in U_{N}$, $t\in\bigl(\left[0,T\right]$ to model the relation between the functional survey variable and the set of functional auxiliary covariates in order to allows a more effective use of the functional auxiliary information.

\section*{Appendix}\label{appendix}

\begin{proof}[Proof of Theorem~\ref{theorem.1}]

\noindent The Lagrangian function associated to the restricted minimization problem is 
\[
L_{a}(\boldsymbol{w},\boldsymbol{\lambda})=\mathcal{D}_{a}^{\ast}(w,d)+\boldsymbol{\lambda^{\top}}(t)\left(\boldsymbol{\mu}_{X}(t)-N^{-1}\sum_{i\in a}w_{i}(t)\boldsymbol{X}_{i}(t)\right),
\]
where $\boldsymbol{\lambda}(t)$ is the corresponding functional Lagrange multiplier vector. The first order conditions are 
\[
\frac{w_{i}(t)-d_{i}}{d_{i}q_{i}\left(t\right)}-\boldsymbol{\lambda}(t)^{\top}\boldsymbol{X}_{i}(t)=0,\qquad i\in a
\]
which can be expressed as
\[
w_{i}(t)=d_{i}\left[1+q_{i}(t)\boldsymbol{\lambda}(t)^{\top}\boldsymbol{X}_{i}(t)\right],\qquad i\in a
\]
where, its uniqueness is guaranteed by the continuous differentiability of $\mathcal{D}_{a}^{\ast}(w,d)$ with respect to $w_{i}(t)$ for all $i\in a$, and by its strictly convexity.

From the functional calibration restriction~\eqref{equ1} and by the existence assumption on the inverse of the matrix $\sum_{i\in a}d_{i}q_{i}(t)\boldsymbol{X}_{i}(t)\boldsymbol{X}_{i}(t)^{\top}$ for all $t$, the Lagrange functional multiplier vector is determined by 
\[
\hat{\boldsymbol{\lambda}}(t)=\left(\sum_{i\in a}d_{i}q_{i}(t)\boldsymbol{X}_{i}(t)\boldsymbol{X}_{i}(t)^{\top}\right)^{-1}\left(\boldsymbol{\mu}_{X}(t)-\hat{\boldsymbol{\mu}}_{X}^{HT}(t)\right).
\]
Finally, replacing $\hat{\boldsymbol{\lambda}}(t)$ into the first order conditions, the calibration functional estimator $\hat{w}_{i}(t)$ of the Theorem is obtained.
\end{proof}

\begin{proof}[Proof of Theorem~\ref{theorem.2}]\vskip .1in 
\citet[Theorem 3, page 775]{Csiszar-84}.
\end{proof}

\begin{proof}[Proof of Lemma~\ref{lemma.1}] \vskip .1in
According to Theorem~\ref{theorem.2}, the maximum of the entropy $S(\nu\parallel\upsilon)$ over the class $\mathcal{V}=\left\lbrace \nu\ll\upsilon\colon Z_{\upsilon}(\boldsymbol{\lambda})<\infty\right\rbrace$ subject to the linear moment calibration constraint $\mathbb{E}_{\upsilon}\left[\mathcal{K}\mathcal{W}\right]=N\boldsymbol{\mu}_{X}(t)$ is attained at $d\nu^{\ast}/d\upsilon=Z_{\upsilon}^{-\text{1}}(\boldsymbol{\lambda}^{\ast})\exp\left\lbrace \langle\boldsymbol{\lambda}^{\ast},\mathcal{K}\mathcal{W}\rangle\right\rbrace$, where 
\begin{equation}\label{equ2}
\begin{split}Z_{\upsilon}(\boldsymbol{\lambda})
 & =\exp\left\{ \mathbb{E}_{\upsilon}\left[\langle\boldsymbol{\lambda},\mathcal{K}\mathcal{W}\rangle\right]+\frac{1}{2}\mathbb{V}_{\upsilon}\left[\langle\boldsymbol{\lambda},\mathcal{K}\mathcal{W}\rangle\right]\right\} \\
 & =\exp\left\{ \sum_{i\in a}d_{i}\int_{0}^{1}\boldsymbol{\lambda}^{\top}(dt)\boldsymbol{X}_{i}(t)+\frac{1}{2}\int_{0}^{1}\left(\sum_{i\in a}\int_{0}^{1}K(s,t)\boldsymbol{\lambda}^{\top}(dt)\boldsymbol{X}_{i}(t)\right)^{2}ds\right\}, 
\end{split}
\end{equation}
owing to that $\mathbb{E}_{\upsilon}\left[d\mathcal{W}_{i}\left(s\right)\right]=0$, and $\mathbb{V}_{\upsilon}\left[d\mathcal{W}_{i}\left(s\right)\right]=ds$, $i\in a$.

Now we proceed with the problem of finding $\boldsymbol{\lambda}^{\ast}(dt)\in\mathcal{M}_{b}\left(C\left[0,1\right]^{q}\right)$, where $\mathcal{M}_{b}$ is the class of bounded continuous measures, such that minimizes 
\begin{equation}\label{equ3}
\begin{split}H_{\upsilon}(\boldsymbol{\lambda})
 & =\frac{1}{2}\int_{0}^{1}\left(\sum_{i\in a}\int_{0}^{1}K(s,t)\boldsymbol{\lambda}^{\top}(dt)\boldsymbol{X}_{i}(t)\right)\left(\sum_{i'\in a}\int_{0}^{1}K(s,t')\boldsymbol{\lambda}^{\top}(dt')\boldsymbol{X}_{i'}(t')\right)ds\\
 & \quad+\int_{0}^{1}\boldsymbol{\lambda}^{\top}(dt)\left(\sum_{i\in a}d_{i}\boldsymbol{X}_{i}(t)-N\boldsymbol{\mu}_{X}(t)\right)\\
 & =\frac{1}{2}\sum_{i\in a}\sum_{i'\in a}\int_{0}^{1}\int_{0}^{1}\int_{0}^{1}K(s,t)K(s,t')\boldsymbol{\lambda}^{\top}(dt)\boldsymbol{X}_{i}(t)\boldsymbol{X}_{i'}^{\top}(t')\boldsymbol{\lambda}(dt')ds\\
 & \quad+\int_{0}^{1}\boldsymbol{\lambda}^{\top}(dt)\left(\sum_{i\in a}d_{i}\boldsymbol{X}_{i}(t)-N\boldsymbol{\mu}_{X}(t)\right).
\end{split}
\end{equation}
The corresponding equation for $\boldsymbol{\lambda}^{\ast}(dt)$ that minimizes $H_{\upsilon}(\boldsymbol{\lambda})$ is given by 
\begin{equation}\label{equ4}
\sum_{i\in a}\sum_{i'\in a}\int_{0}^{1}\int_{0}^{1}K(s,t)K(s,t')\boldsymbol{X}_{i}(t)\boldsymbol{X}_{i'}^{\top}(t')\boldsymbol{\lambda^{\ast}}(dt')ds+\sum_{i\in a}d_{i}\boldsymbol{X}_{i}(t)=N\boldsymbol{\mu}_{X}(t),
\end{equation}
which can be rewritten as 
\[
\sum_{i\in a}\left[\int_{0}^{1}K(s,t)\left(\sum_{i'\in a}\int_{0}^{1}K(s,t')\boldsymbol{X}_{i'}^{\top}(t')\boldsymbol{\lambda}^{\ast}(dt')\right)ds+d_{i}\right]\boldsymbol{X}_{i}(t)=N\boldsymbol{\mu}_{X}(t),
\]
obtaining, by the moment calibration constraint~\eqref{restriction}, the Lemma's result. 
\end{proof}

\begin{proof}[Proof of Lemma~\ref{lemma.2}]\vskip .1in
For each $i\in a$, define a random variable $m_{i}\left(\left(a,b\right]\right)$ for $\left(a,b\right]\subset\left[0,1\right]$, 
\[
m_{i}\left(\left(a,b\right]\right)\triangleq\mathcal{W}_{i}(b)-\mathcal{W}_{i}(a)=\sum_{k=N(a)+1}^{N(b)}\xi_{ik}.
\]

By the Lévy-Khintchine formula for Lévy processes, the moment generating function of the $n$-dimensional compound Poisson process $\mathcal{\boldsymbol{\mathcal{W}}}(s)$ is given by 
\[
\mathbb{E}_{\upsilon}\left[\exp\left\{ \left\langle \boldsymbol{\alpha},\boldsymbol{\mathcal{W}}(s)\right\rangle \right\} \right]=\exp\left\{ s\gamma\int_{\mathbb{R}^{n}}\left(e^{\left\langle \boldsymbol{\alpha},\boldsymbol{\xi}_{k}\right\rangle }-1\right)u\left(d\boldsymbol{\xi}_{k}\right)\right\} ,\qquad\boldsymbol{\alpha}\in\mathbb{R}^{n},
\]
where $\boldsymbol{\xi}_{k}=\left(\xi_{1k},\ldots,\xi_{nk}\right)^{\top}$. This formula can be generalized for a continuous function $g(s)$ from $\left[0,1\right]$ to $\mathbb{R}$ and defining $\langle g(s),\mathcal{W}_{i}\rangle=\int_{0}^{1}g(s)d\mathcal{W}_{i}(s)$ for each $i\in a$, which is approximated by $\sum_{j=1}^{J}g\left(s_{j-1}\right)m_{i}\left(\left(s_{j-1},s_{j}\right]\right)$,
with $s_{j}=j/J$, $j=1,\ldots,J$. Thus, by the independence of $m_{i}\left(\left(a,b\right]\right)$, we have that 
\begin{equation}\label{equ5}
\begin{split}\mathbb{E}_{\upsilon}\left[\exp\left\{ \langle g(s),d\mathcal{W}_{i}\rangle\right\} \right] & =\lim_{J\rightarrow\infty}\prod_{j=1}^{J}\mathbb{E}_{\upsilon}\left[\exp\left\{ g\left(s_{j-1}\right)m_{i}\left(\left(s_{j-1},s_{j}\right]\right)\right\} \right]\\
 & =\lim_{J\rightarrow\infty}\prod_{j=1}^{J}\exp\left\{ \mathbb{E}_{\upsilon}\left[\exp\left\{ g\left(s_{j-1}\right)\xi_{i}\right\} \right]\right\} \\
 & =\lim_{J\rightarrow\infty}\prod_{j=1}^{J}\exp\left\{\frac{\gamma}{J}\int_{\mathbb{R}}\left(\exp\left\{ g\left(s_{j-1}\right)\xi_{i}\right\} -1\right)u\left(d\xi_{i}\right)\right\} \\
 & =\exp\left\{ \gamma\int_{0}^{1}ds\int_{\mathbb{R}}\left(\exp\left\{ g\left(s\right)\xi_{i}\right\} -1\right)u\left(d\xi_{i}\right)\right\},\quad\quad i\in a.
\end{split}
\end{equation}

Now, by the Theorem~\ref{theorem.2}, the maximum of the entropy $S$ over the class $\mathcal{V}$ subject to $\mathbb{E}_{\upsilon}\left[\mathcal{K}\mathcal{W}\right]=N\boldsymbol{\mu}_{X}(t)$ is achieved at $d\nu^{\ast}/d\upsilon=Z_{\upsilon}^{-1}(\boldsymbol{\lambda}^{\ast})\exp\left\lbrace \langle\boldsymbol{\lambda}^{\ast},\mathcal{KW}\rangle\right\rbrace$ with

\[
\begin{split}\langle\boldsymbol{\lambda},\mathcal{KW}\rangle & =\int_{0}^{1}\boldsymbol{\lambda}^{\top}(dt)\int_{0}^{1}\sum_{i\in a}K(s,t)\boldsymbol{X}_{i}(t)d\mathcal{W}_{i}(s)+\int_{0}^{1}\boldsymbol{\lambda}^{\top}(dt)\sum_{i\in a}d_{i}\boldsymbol{X}_{i}(t)\\
 & =\langle g(s),\mathcal{W}_{i}\rangle+\left\langle \boldsymbol{\lambda},\sum_{i\in a}d_{i}\boldsymbol{X}_{i}(t)\right\rangle,
\end{split}
\]
where $g(s)=\int_{0}^{1}\boldsymbol{\lambda}^{\top}(dt)\sum_{i\in a}K(s,t)\boldsymbol{X}_{i}(t)$.

\noindent Therefore,
\begin{equation}\label{equ6}
\begin{split}
Z_{\upsilon}(\boldsymbol{\lambda})
 & =\exp\left\{ \gamma\int_{0}^{1}ds\int_{\mathbb{R}}\left(\exp\left\{ g\left(s\right)\xi_{i}\right\} -1\right)u\left(d\xi_{i}\right)\right\} \exp\left\{ \left\langle \boldsymbol{\lambda},\sum_{i\in a}d_{i}\boldsymbol{X}_{i}(t)\right\rangle \right\} \\
 & =\exp\left\{ \gamma\int_{0}^{1}ds\int_{\mathbb{R}}\left(\exp\left\{ g\left(s\right)\xi_{i}\right\} -1\right)u\left(d\xi_{i}\right)+\left\langle \boldsymbol{\lambda},\sum_{i\in a}d_{i}\boldsymbol{X}_{i}(t)\right\rangle \right\} 
\end{split}
\end{equation}

Finally, as in the proof of Lemma~\ref{lemma.1}, the problem is concentrated to find $\boldsymbol{\lambda}^{\ast}(t)$ such that minimizes 
\begin{equation}\label{equ7}
\begin{split}H_{\upsilon}(\boldsymbol{\lambda})
 & =\gamma\int_{0}^{1}ds\int_{\mathbb{R}}\left(\exp\left\{ \int_{0}^{1}\boldsymbol{\lambda}^{\top}(dt)\sum_{i\in a}K(s,t)\xi_{i}\boldsymbol{X}_{i}(t)\right\} -1\right)u\left(d\xi_{i}\right)\\
 & \quad+\int_{0}^{1}\boldsymbol{\lambda}^{\top}(dt)\left(\sum_{i\in a}d_{i}\boldsymbol{X}_{i}(t)-N\boldsymbol{\mu}_{X}(t)\right).
\end{split}
\end{equation}

\noindent  The corresponding equation for $\boldsymbol{\lambda}^{\ast}(dt)$ that minimizes $H_{\upsilon}(\boldsymbol{\lambda})$ is given by
\begin{equation}\label{equ8}
\begin{split}
\sum_{i\in a}\left[\int_{0}^{1}K(s,t)\left(\int_{\mathbb{R}}\xi_{i}\exp\left\{ \sum_{i\in a}\int_{0}^{1}K(s,t)\xi_{i}\boldsymbol{X}_{i}^{\top}(t)\boldsymbol{\lambda}^{\ast}(dt)\right\} u\left(d\xi_{i}\right)\right)ds+d_{i}\right]\\
\times\boldsymbol{X}_{i}(t)=N\boldsymbol{\mu}_{X}(t),
\end{split}
\end{equation}
obtaining, by the moment calibration constraint~\eqref{restriction}, the Lemma's result. 
\end{proof}


\bibliography{references_entropy}

\begin{thebibliography}{35}
\providecommand{\natexlab}[1]{#1}
\providecommand{\url}[1]{\texttt{#1}}
\expandafter\ifx\csname urlstyle\endcsname\relax
  \providecommand{\doi}[1]{doi: #1}\else
  \providecommand{\doi}{doi: \begingroup \urlstyle{rm}\Url}\fi

\bibitem[Breidt and Opsomer(2000)]{Breidt-Opsomer-00}
J.~Breidt, F and J.~D. Opsomer.
\newblock Local polynomial regression estimators in survey sampling.
\newblock \emph{The Annals of Statistics}, 28:\penalty0 1026--1053, 2000.

\bibitem[Cardot and Josserand(2011)]{Cardot-Josserand-11}
H.~Cardot and E.~Josserand.
\newblock {H}orvitz-{T}hompson estimators for functional data: asymptotic
  confidence bands and optimal allocation for stratified sampling.
\newblock \emph{Biometrika}, 98:\penalty0 107--118, 2011.

\bibitem[Cardot et~al.(2011)Cardot, Degras, and
  Josserand]{Cardot-Degras-Josserand-11}
H.~Cardot, D.~Degras, and E.~Josserand.
\newblock Confidence bands for {H}orvitz-{T}hompson estimators using sampled
  noisy functional data.
\newblock 2011.
\newblock ArXiv:1105.2135v1. Available at http://arxiv.org/abs/1105.2135v1.

\bibitem[Csisz\'{a}r(1975)]{Csiszar-75}
I.~Csisz\'{a}r.
\newblock {$I$}-divergence geometry of probability distributions and
  minimization problems.
\newblock \emph{The Annals of Probability}, 3:\penalty0 146--158, 1975.

\bibitem[Csisz\'{a}r(1984)]{Csiszar-84}
I.~Csisz\'{a}r.
\newblock Sanov property, generalized {$I$}-projection and a conditional limit
  theorem.
\newblock \emph{The Annals of Probability}, 12:\penalty0 768--793, 1984.

\bibitem[Dacunha-Castelle and Gamboa(1990)]{Dacunha-Gamboa-90}
D.~Dacunha-Castelle and F.~Gamboa.
\newblock Maximum d'entropie et probl\`eme des moments.
\newblock \emph{Annales de l'Institut Henri Poincar\'{e}. Series {B},
  Probabilit\'{e}s et Statistiques}, 26:\penalty0 567--596, 1990.

\bibitem[Deville and S\"{a}rndal(1992)]{Deville-Sarndal-92}
J.~C. Deville and C.~E. S\"{a}rndal.
\newblock Calibration estimators in survey sampling.
\newblock \emph{Journal of the American Statistical Association}, 87:\penalty0
  376--382, 1992.

\bibitem[Fuller(2009)]{Fuller-09}
W.~A. Fuller.
\newblock \emph{Sampling Statistics}.
\newblock Wiley Series in Survey Methodology. Wiley, 2009.

\bibitem[Gamboa(1989)]{Gamboa-89}
F.~Gamboa.
\newblock \emph{M\'ethode du maximum d'entropie sur la moyenne et
  applications}.
\newblock PhD thesis, Universit\'e de Paris-Sud, Orsay, 1989.

\bibitem[Gamboa and Gassiat(1997)]{Gamboa-Gassiat-97}
F.~Gamboa and E.~Gassiat.
\newblock Bayesian methods and maximum entropy for ill-posed inverse problems.
\newblock \emph{The Annals of Statistics}, 25:\penalty0 328--350, 1997.

\bibitem[Gamboa et~al.(2011)Gamboa, Loubes, and
  Rochet]{Gamboa-Loubes-Rochet-11}
F.~Gamboa, J.-M. Loubes, and P.~Rochet.
\newblock Maximum entropy estimation for survey sampling.
\newblock \emph{Journal of Statistical Planning and Inference}, 141:\penalty0
  305--317, 2011.

\bibitem[Golan and Gzyl(2012)]{Golan-Gzyl-12}
A.~Golan and H.~Gzyl.
\newblock An entropic estimator for linear inverse problems.
\newblock \emph{Entropy}, 14:\penalty0 892--923, 2012.

\bibitem[Gzyl(2000)]{Gzyl-00}
H.~Gzyl.
\newblock Maxentropic reconstruction of some probability distributions.
\newblock \emph{Studies in Applied Mathematics}, 105:\penalty0 235--243, 2000.

\bibitem[Gzyl and Vel\'{a}squez(2011)]{Gzyl-Velasquez-11}
H.~Gzyl and Y.~Vel\'{a}squez.
\newblock \emph{Linear Inverse Problems: the Maximum Entropy Connection},
  volume~83 of \emph{Series on Advances in Mathematics for Applied Sciences}.
\newblock World Scientific, 2011.

\bibitem[Harms and Duchesne(2006)]{Harms-Duchesne-06}
T.~Harms and P.~Duchesne.
\newblock On calibration estimation for quantiles.
\newblock \emph{Survey Methodology}, 32:\penalty0 37--52, 2006.

\bibitem[Horv\'{a}th and Kokoszka(2012)]{Horvath-Kokoszka-12}
L.~Horv\'{a}th and P.~Kokoszka.
\newblock \emph{Inference for Functional Data with Applications}.
\newblock Springer Series in Statistics. Springer, 2012.

\bibitem[Horvitz and Thompson(1953)]{Horvitz-Thompson-52}
D.~G. Horvitz and D.~J. Thompson.
\newblock A generalization of sampling without replacement from a finite
  universe.
\newblock \emph{Journal of the American Statistical Association}, 47:\penalty0
  663--685, 1953.

\bibitem[Kova\^{c}evi\'{c}(1997)]{Kovacevic-97}
M.~Kova\^{c}evi\'{c}.
\newblock Calibration estimation of cumulative distribution and quantile
  functions from survey data.
\newblock \emph{Proceedings of the Survey Methods Section, Statistical Society
  of Canada}, 47:\penalty0 139--144, 1997.

\bibitem[Mar\'{e}chal(1999)]{Marechal-99}
P.~Mar\'{e}chal.
\newblock On the principle of maximum entropy as a methodology for solving
  linear inverse problems.
\newblock In B.~Grigelionis, V.~P. J.~Kubilius, H.~Pragarauskas, R.~Ruszkis,
  and V.~Statulevicius, editors, \emph{Probability Theory and Mathematical
  Statistics}, Proceedings of the Seventh Vilnius Conference, pages 481--492,
  Vilnius, Lithuania, 1999. VPS/TEV.

\bibitem[Mohammad-Djafari(1996)]{Mohammad-Djafari-96}
A.~Mohammad-Djafari.
\newblock A comparison of two approaches: Maximum entropy on the mean ({MEM})
  and {B}ayesian estimation ({BAYES}) for inverse problems.
\newblock In M.~Sears, V.~Nedeljkovic, N.~E. Pendock, and S.~Sibisi, editors,
  \emph{Maximum Entropy and Bayesian Methods}, pages 77--91, Berg-en-Dal, South
  Africa, 1996. Kluwer Academic Publishers.

\bibitem[Montanari and Ranalli(2005)]{Montanari-Ranalli-05}
G.~E. Montanari and M.~G. Ranalli.
\newblock Nonparametric model calibration estimation in survey sampling.
\newblock \emph{Journal of the American Statistical Association}, 100:\penalty0
  1429--1442, 2005.

\bibitem[Navaza(1985)]{Navaza-85}
J.~Navaza.
\newblock On the maximum entropy estimate of electron density function.
\newblock \emph{Acta Crystallographica}, A41:\penalty0 232--244, 1985.

\bibitem[Navaza(1986)]{Navaza-86}
J.~Navaza.
\newblock The use of non-local constraints in maximum-entropy electron density
  reconstruction.
\newblock \emph{Acta Crystallographica}, A42:\penalty0 212--223, 1986.

\bibitem[Ramsay and Silverman(2002)]{Ramsay-Silverman-02}
J.~O. Ramsay and B.~W. Silverman.
\newblock \emph{Applied Functional Data Analysis: Methods and Case Studies}.
\newblock Springer-Verlag, 2002.

\bibitem[Ramsay and Silverman(2005)]{Ramsay-Silverman-05}
J.~O. Ramsay and B.~W. Silverman.
\newblock \emph{Functional Data Analysis}.
\newblock Springer Series in Statistics. Springer-Verlag, 2nd edition, 2005.

\bibitem[Rao et~al.(1990)Rao, Kovar, and Mantel]{Rao.et.al-90}
J.~N.~K. Rao, J.~G. Kovar, and H.~J. Mantel.
\newblock On estimating distribution functions and quantiles from survey data
  using auxiliary information.
\newblock \emph{Biometrika}, 77:\penalty0 365--375, 1990.

\bibitem[Rueda et~al.(2007)Rueda, Mart\'{i}nez, Mart\'{i}nez, and
  Arcos]{Rueda.et.al-07}
M.~Rueda, S.~Mart\'{i}nez, H.~Mart\'{i}nez, and A.~Arcos.
\newblock Estimation of the distribution function with calibration methods.
\newblock \emph{Journal of Statistical Planning and Inference}, 137:\penalty0
  435--448, 2007.

\bibitem[S\"{a}rndal(2007)]{Sarndal-07}
C.~E. S\"{a}rndal.
\newblock The calibration approach in survey theory and practice.
\newblock \emph{Survey Methodology}, 33:\penalty0 99--119, 2007.

\bibitem[S{\"a}rndal et~al.(1992)S{\"a}rndal, Swensson, and
  Wretman]{Sarndal-92}
C.~E. S{\"a}rndal, B.~Swensson, and J.~Wretman.
\newblock \emph{Model Assisted Survey Sampling}.
\newblock Springer Series in Statistics. Springer, 1992.

\bibitem[Singh(2001)]{Singh-01}
S.~Singh.
\newblock Generalized calibration approach for estimating variance in survey
  sampling.
\newblock \emph{Annals of the Institute of Statistical Mathematics},
  53:\penalty0 404--417, 2001.

\bibitem[Th\'{e}berge(1999)]{Theberge-99}
A.~Th\'{e}berge.
\newblock Extensions of calibration estimators in survey sampling.
\newblock \emph{Journal of the American Statistical Association}, 94:\penalty0
  635--644, 1999.

\bibitem[Varadhan(2012)]{Varadhan-12}
R.~Varadhan.
\newblock R-package \texttt{{BB}}: Solving and optimizing large-scale nonlinear
  systems.
\newblock 2012.
\newblock Available at http://cran.r-project.org.

\bibitem[Varadhan and Gilbert(2009)]{Varadhan-Gilbert-09}
R.~Varadhan and P.~Gilbert.
\newblock \texttt{{BB}}: An {R} package for solving a large system of nonlinear
  equations and for optimizing a high-dimensional nonlinear objective function.
\newblock \emph{J. Stat. Softw.}, 32, 2009.
\newblock Available at http://www.jstatsoft.org/v32/i04/.

\bibitem[Wu(2003)]{Wu-03}
C.~Wu.
\newblock Optimal calibration estimators in survey sampling.
\newblock \emph{Biometrika}, 90:\penalty0 937--951, 2003.

\bibitem[Wu and Sitter(2001)]{Wu-Sitter-01}
C.~Wu and R.~R. Sitter.
\newblock A model-calibration approach to using complete auxiliary information
  from survey data.
\newblock \emph{Journal of the American Statistical Association}, 96:\penalty0
  185--193, 2001.

\end{thebibliography}
\end{document}